\documentclass{amsart}
\newcommand{\T}{\mathcal{T}}
\renewcommand{\S}{\mathcal{S}}
\newcommand{\B}{\mathcal{B}}
\newcommand{\Z}{\mathbb{Z}}
\newcommand{\R}{\mathbb{R}}
\newcommand{\N}{\mathbb{N}}
\newcommand\mute[1]{}
\newcommand{\sdot}{\! \cdot \!}
\usepackage{braket,color}
\usepackage{amssymb,amscd,amsmath,amsthm,url,braket,microtype}
\usepackage[all]{xy}
\usepackage[pdftex]{graphicx}
\usepackage{dsfont}
\usepackage{tikz}

\usepackage[margin=1.3in]{geometry}

\DeclareMathOperator{\Av}{\rm GenCon}

\def \ssm{\smallsetminus}
\DeclareMathOperator{\Symm}{\rm Symm}
\newcommand{\Spos}{S_{\rm pos}}
\newcommand{\Sneg}{S_{\rm neg}}

\newtheorem {Theorem}                    {Theorem}
\newtheorem {Proposition}[Theorem]       {Proposition}

\newtheorem*{Question}   {Question}

\theoremstyle{definition}

\theoremstyle{remark}
\newtheorem{Remark}[Theorem]	{Remark}
\newtheorem{Example}[Theorem]	{Example}
\begin{document}

\title{Conjugation curvature for Cayley graphs}
\author{Assaf Bar-Natan, Moon Duchin, and Robert Kropholler}
\thanks{The authors gratefully acknowledge support from U.S. National Science Foundation grants 
DMS 1107452, 1107263, 1107367 ``RNMS: GEometric structures And Representation varieties" 
(the GEAR Network).  MD is supported by NSF DMS-1255442.
MD thanks Ruth Charney, Ilya Kapovich, and Jennifer Taback for very useful conversations.
We thank an anonymous referee for careful reading and for several suggestions that made the paper considerably stronger.}
\begin{abstract}
We introduce a notion of Ricci curvature for Cayley graphs that can be thought of as ``medium-scale"
because it is neither infinitesimal nor asymptotic, but based on a chosen finite radius parameter. 
We argue that it gives the foundation for a definition of Ricci curvature well adapted to geometric group theory, beginning by observing that the sign can easily be characterized in terms of conjugation in the group.
With this {\em conjugation curvature} $\kappa$, abelian groups are identically flat, and in the other direction we show that $\kappa\equiv 0$ implies the group is virtually abelian.  
Beyond that, $\kappa$
captures known curvature phenomena in right-angled Artin groups (including free groups)
and nilpotent groups, and has a strong relationship to other group-theoretic notions like growth rate and dead ends.  We study dependence on generators and behavior under embeddings, and close with 
directions for further development and study.  
\end{abstract}
\maketitle
\tableofcontents

\section{Introduction}

The emergence in the last several decades of a theory of large-scale negative curvature, or 
$\delta$-hyperbolicity, has provided an enormous breakthrough and a powerful tool for the 
theory of infinite groups and graphs.
The existence of any $\delta\ge 0$ for which a Cayley graph satisfies this family of metric conditions 
implies a host of consequences for the geometry of the graph and the spaces acted on by 
the group.  However, for large finite graphs, this theory is not applicable---every bounded-diameter
metric space is $\delta$-hyperbolic for $\delta \geq \mathrm{diam}$, so hyperbolicity provides no information.  
On the other hand, classical manifold notions of curvature are defined infinitesimally, so do not make 
sense for graphs and networks.  
This work was initially motivated by the desire to find a suitable curvature tool for the analysis of large but finite graphs or networks.

In the 2000s, Yann Ollivier defined a notion of metric Ricci curvature---at finite scales, not 
asymptotic or infinitesimal---for graphs
and other non-manifold geometries \cite{ollivier2007,ollivier2009,oll-surv,oll-curvsurv}.  
To do this, he offered a geometric interpretation of classical
Ricci curvature as follows:  curvature measures the extent to which corresponding points on 
spheres are {\em closer together} or {\em farther apart} than the centers of the spheres.  (Negative Ricci 
curvature occurs when corresponding points are farther and positive curvature when they are closer.)  
The ability to define {\em corresponding} points on spheres in the manifold
setting relies on parallel transport, so his generalized definition replaces the comparison 
distance with transportation distance, $L^1$ Wasserstein distance to be precise.\footnote{We note that 
Ollivier's was one of several proposed discretizations of Ricci curvature formulated in the same time period; 
other quite different definitions can be found in sources such as \cite{forman,yau}.}
With Ollivier's definition, he can show that in the manifold setting his metric Ricci curvature agrees with classical Ricci curvature on small scales, to first order.

\begin{figure}[ht]
\begin{tikzpicture}

\begin{scope}[scale=.85]
\filldraw (0,0) circle (.07);
\draw (0,0) circle (1)  (0,0) -- (79:1)   (0,0)--(109:1)  (0,0)--(5:1);
\filldraw [red] (79:1) circle (.07);   \filldraw [blue] (109:1) circle (.07);   \filldraw [green!50!black] (5:1) circle (.07);
\begin{scope}[xshift=5cm, yshift=2cm, rotate=-54]
\filldraw (0,0) circle (.07);
\draw (0,0) circle (1)  (0,0) -- (79:1)   (0,0)--(109:1)  (0,0)--(5:1);
\filldraw [red] (79:1) circle (.07);   \filldraw [blue] (109:1) circle (.07);   \filldraw [green!50!black] (5:1) circle (.07);
\end{scope}
\draw [dashed] (0,0).. controls (1,4) and (3,1)  ..(5,2);
\end{scope}
\begin{scope}[xshift=8cm]
\draw (-1,0,0)--(4,0,0)--(4,0,-1)--(4,2,-1)      (0,-1,0)--(0,1,0)  (0,0,-1)--(0,0,1)  (3,1,-1)--(5,1,-1)  (4,1,-2)--(4,1,0);
\filldraw (0,0,0) circle (.07)  (4,1,-1) circle (.07);  \filldraw [red] (0,0,-1) circle (.07)  (4,1,-2) circle (.07); 
\filldraw [blue] (1,0,0) circle (.07)  (5,1,-1) circle (.07);  \filldraw [green!50!black] (0,1,0) circle (.07)  (4,2,-1) circle (.07);
\end{scope}
\end{tikzpicture}
\caption{(Left:) In a manifold, points on spheres centered at the endpoints have a correspondence by parallel transport along a geodesic.
(Right:) In the Cayley graph for $\Z^3$, the colors mark corresponding neighbors of $e$ and $a^4bc$.}
\end{figure}

On the other hand, optimal transport is computationally expensive, making Ollivier's definition challenging 
to work with.  
The current paper is founded on the observation that Cayley graphs of finitely generated groups,
because of the edge labelings, provide a setting in which transportation distance is not needed, because
any two points' neighbors can be put in natural correspondence.  Thus we arrive at a new
definition of Ricci curvature in Cayley graphs, which parallels manifold Ricci curvature more closely 
than the Ollivier adaptation.  We call this {\em conjugation curvature} because it has a simple interpretation in terms of conjugation and word length. Running through a range of standard constructions from geometric group theory, we demonstrate the viability of conjugation curvature as the ``right"  Ricci curvature for discrete groups, offering suggestive connections between geometric and algebraic properties.  
This paper, and several successors written since its circulation as a preprint, have begun to elaborate the theory of conjugation curvature for discrete groups.

\smallskip

\paragraph{\bf Main results about conjugation curvature.}
\begin{itemize}
\item It is immediate that abelian groups have $\kappa\equiv 0$.  On the other hand, if any generating set has 
$\kappa=0$ at each generator, then $G$ is virtually abelian.  Furthermore, all virtually abelian groups 
have generating sets that make them ``virtually flat."  (Theorems~\ref{thm:vabelian}-\ref{thm:vabelian2})
\item Right-angled Artin groups with standard generators have zero curvature at central elements and negative curvature everywhere else.  In particular non-elementary free groups are negatively curved.
(Proposition~\ref{prop:RAAGs})
\item In the Heisenberg group, points of positive, zero, and negative curvature each have positive density. 
(Theorem~\ref{thm:heis})
\item If any Cayley graph $\Gamma(G,S)$ has positive curvature at all points outside of a ball, then $G$ is a finite group.  (Theorem~\ref{thm:finiteness})
\item If any Cayley graph $\Gamma(G,S)$ has negative curvature at all points outside of a ball, then $G$ has exponential growth.
(Nguyen--Wang \cite{expgrowth})
\item Dead-end elements give non-negative curvature and can often be leveraged to find positive curvature. (Kropholler--Mallery \cite{krophollermallery})
\end{itemize}

At the same time, it is important to understand the limitations of group definitions that are  
sensitive to the choice of generators.  
Theorems~\ref{thm:cat0}--\ref{thm:neg-embed2} sound notes of caution about interpreting 
isolated points of positive or negative curvature and about the behavior under embeddings.

\subsection{Definitions}

We will write $(G,S)$ for a group $G$ with generating set $S$, and $e\in G$ 
for the identity element.
We will assume throughout that $|S|<\infty$, $S=S^{-1}$,
and $e\notin S$.  We will use $\Gamma$ for graphs, in particular for the defining graph of a right-angled
Artin group $A_\Gamma$ (defined below in Example~\ref{ex:RAAG})
and for the Cayley graph  $\Gamma(G,S)$ associated to $(G,S)$.  
  For $x\in G$, let $|x|$ denote its length in the $(G,S)$ word metric.
  We write $B_r(x)=\{g\in G : |x^{-1}g|\le r\}$ and $S_r(x)=\{g\in G : |x^{-1}g|=r\}$ for the 
  ball and sphere of radius $r$ centered at $x$, respectively.
  
Let $\T_r(x,y)$ be the $L^1$ transportation distance for the uniform measure
on $B_r(x)$ and $B_r(y)$: it is the 
``earth mover's distance," or the infimal distance for any plan of moving mass from one probability distribution to the other.  
(See \cite{ollivier2009}.)
Using this, Ollivier defines what we will call the {\em transportation curvature} 
$\kappa_r^\T(x,y)=\frac{d(x,y)-\T_r(x,y)}{d(x,y)}$.

In a Cayley graph $\Gamma(G,S)$, note that there is a natural correspondence between neighbors of $x$ and neighbors 
of $y$ given by matching $xa$ with $ya$ for $a\in S$; more generally we can compare $xw$ with $yw$ for any 
word $w\in G$.
Thus let $\S_r(x,y)$ be the radius-$r$ spherical comparison distance in the Cayley graph $\Gamma(G,S)$:
$$\S_r(x,y):= \frac{1}{|S_r|} \sum_{w\in S_r} d(xw,yw),$$
or similarly  $\B_r(x,y)$ on balls rather than spheres.
 Thus  we can define a new Ricci curvature on groups via 
$\kappa^\B_r(x,y)=\frac{d(x,y)-\B_r(x,y)}{d(x,y)}$ or $\kappa^\S_r(x,y)=\frac{d(x,y)-\S_r(x,y)}{d(x,y)}$.

Note that $\kappa^\B_r(x,y)=\kappa^\B_r(e,x^{-1}y)$, so to study 
the curvature in a group it suffices to fix one of the points at the identity.  Thus we will write $\kappa^\B_r(g)$ 
to mean $\kappa^\B_r(e,g)$, and similarly for the spherical conjugation curvature $\S$.
For all three definitions, when the parameter $r$ is omitted, we are considering the case $r=1$.
In this paper we will focus on the spherical conjugation curvature with $r=1$, so we will 
simplify the notation even further, writing 
$$\kappa(g)=\kappa^\S_1(e,g).$$

One observes that $\S_1(g)$ is the 
the average word length of a conjugate of $x$ by a generator, so we will introduce the notation
$\Av(g)=\Av(G,S,g):=\frac{1}{|S|}\sum\limits_{a\in S} |a^{-1} g a| = \S_1(e,g).$
Our notion of Ricci curvature on groups is then given by setting $\kappa(e)=0$, and otherwise\footnote{We adopt the denominator $|g|$ to facilitate comparison to Ollivier's transportation curvature, but other normalizations are possible.  In any event, we mostly focus on the sign of $\kappa$.}
$$\kappa(g) =  \frac{|g|-\Av(g)}{|g|}.$$

Negative Ricci curvature (with respect to spheres of radius one) 
occurs at a group element $g$ when its conjugates by generators are,
on average, longer in word length than $g$ itself.  Respectively, positive curvature occurs when conjugation
shortens the length, and zero curvature when it is unchanged.
This motivates us to call $\kappa$ the  {\em conjugation curvature}.

\subsection{Properties and examples}

\begin{Proposition}[Basic properties of conjugation curvature] \label{properties} \

\begin{enumerate}
 \item \label{central} The curvature is zero at central elements, because
$g\in Z(G)\implies \Av(g)=|g|$.  
Indeed, all conjugation curvatures are identically zero on the center,
for balls or spheres of any radius.
This is because $d(xa,ya)=d(ax,ay)=d(x,y)$, so corresponding points have the same distance 
as the centers, giving $\S=\B=d$ and so $\kappa^\B_r=\kappa^\S_r\equiv 0$ for all $r$.
In particular, abelian groups with any generating sets have identically zero curvature.
\item \label{kappa0} For finite groups, we can take all elements as generators 
($S=G\ssm\{e\}$) to make $\kappa\equiv 0$, because 
 the Cayley graph is a complete graph so all distances are $1$.

\item For $r=1$, the value $\B$ is obtained as a weighted average of the value $\S$ with $d$,
the distance of the centers $x$ and $y$.  This means that  the sign of the conjugation curvature is the same whether
one considers balls or spheres of radius $1$.
\item Since $\T$ is an inf, and correspondence defines one of the competing transportation plans,
we have $\T\le \B$, so $\kappa_\T\ge \kappa_\B$ in general.
This means that for abelian groups, the transportation curvature is non-negative.
 \item\label{avgen} $|a^{-1} x a|$ is integer-valued and equals zero only for $x=e$, so  
 $\Av(e)=0$ and  $\Av(x)\ge 1$ for non-identity elements.
 Generators $a\in S$ satisfy  $\kappa(a)=1-\Av(a)$, so  $\kappa(a)\le 0$.

\item Suppose that $\{e,w,w^2,w^3,\hdots\}$ is geodesic in a group 
  $G$, meaning that $|w^k|=k|w|$ for all $k$.
  It follows that $\kappa(w)\le 0$. 
  Otherwise, there exists some generator $a\in S$ for which 
  $|a^{-1}wa|=d(a,wa)\le |w|-1$. In that case we would have
  $|w^k|\le |a^{-1}w^k a|+2\le k|w|-k+2$,
  which for $k\ge 3$ contradicts the assumption.

\item\label{2/d} For any $x,y$ and any $w\in B_r$, we have $d(x,y)-2r\le d(xw,yw) \le d(x,y)+2r$, 
and so the average is also in that range, which means that the correspondence curvature
is bounded:  $-2r/d \le \kappa^\B_r, \kappa^\S_r \le 2r/d$ for $d=d(x,y)$.  In particular if you fix $r$ and make $d$ large, 
the curvature becomes pinched near zero.
When $|w|\le 2$, we get $\kappa(w)\le 2/|w| \le 1$.  Putting these together,
we get $\kappa\le 1$ for every group element.
\end{enumerate}
\end{Proposition}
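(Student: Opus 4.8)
The plan is to derive every assertion from left-invariance of the word metric together with the triangle inequality. First I would record the two identities $d(x,xw)=|x^{-1}(xw)|=|w|$ and $d(y,yw)=|w|$, so that right-multiplying $x$ or $y$ by $w$ displaces it by exactly $|w|$, and $|w|\le r$ for every $w\in B_r$ (hence also for $w\in S_r$). The upper bound $d(xw,yw)\le d(x,y)+2r$ is then the triangle inequality along $xw\to x\to y\to yw$, while the lower bound $d(xw,yw)\ge d(x,y)-2r$ comes from applying the triangle inequality to $d(x,y)$ through the intermediate points $xw$ and $yw$. Both steps use only $|w|\le r$, so every summand defining $\B_r(x,y)$ and $\S_r(x,y)$ lies in the interval $[\,d-2r,\,d+2r\,]$, where $d=d(x,y)$.

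Since an average of real numbers each lying in a fixed interval again lies in that interval, both $\B_r(x,y)$ and $\S_r(x,y)$ lie in $[\,d-2r,\,d+2r\,]$ as well. Subtracting from $d$ gives $d-\B_r,\,d-\S_r\in[-2r,2r]$, and dividing by $d>0$ (here $x\ne y$, so $d$ is positive) yields the claimed two-sided bound $-2r/d\le\kappa^\B_r,\kappa^\S_r\le 2r/d$. The pinching remark is then immediate, since for fixed $r$ the bounding quantities $\pm 2r/d$ tend to $0$ as $d\to\infty$.

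For the final conclusion I would specialize to $r=1$, $x=e$, $y=g$, so that $d=|g|$ and the upper bound becomes $\kappa(g)\le 2/|g|$; whenever $|g|\ge 2$ this already gives $\kappa(g)\le 1$. The one case this estimate does not settle is $|g|=1$, where $2/|g|=2$ is too weak, and this is really the only point requiring a separate argument. There I would instead invoke part~(\ref{avgen}), which gives $\kappa(a)=1-\Av(a)\le 0\le 1$ for every generator $a$. Combining the two cases over all non-identity elements establishes $\kappa\le 1$ for every group element.
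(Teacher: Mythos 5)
Your write-up addresses only item \eqref{2/d} of the seven-part proposition, but that is the one item here with substantive content, and for it your argument is correct and essentially identical to the paper's: the two-sided bound $d-2r\le d(xw,yw)\le d+2r$ via the triangle inequality (through $x,y$ for the upper bound, through $xw,yw$ for the lower), the observation that averages of numbers in an interval stay in that interval, division by $d=d(x,y)>0$, and a case split to get $\kappa\le 1$. One point genuinely in your favor: the paper's own sentence ``When $|w|\le 2$, we get $\kappa(w)\le 2/|w|\le 1$'' is garbled, since for $|w|\le 2$ one has $2/|w|\ge 1$; the intended argument is exactly the one you spell out, with $|g|\ge 2$ settled by the $2/d$ bound and the remaining case $|g|=1$ settled by item \eqref{avgen}, and your appeal to that item is not circular because it nowhere relies on item \eqref{2/d}. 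If the task was the full proposition, you should also record justifications for items (1)--(6) --- e.g.\ left- and right-invariance giving $d(xa,ya)=d(x,y)$ for central comparisons, $\T\le\B$ because the correspondence is one competing transport plan, and the geodesic-ray argument for item (6) --- though each of these is a one-line verification already embedded in the statement itself.
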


\begin{Proposition}[Damping out]
If $(G,S)$ is an infinite group, then 
the average curvature over the ball of radius $n$ tends to zero as $n\to\infty$.
\end{Proposition}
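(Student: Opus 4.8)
The plan is to reduce everything to the pinching estimate already recorded in Proposition~\ref{properties}: taking $r=1$ and $d=|g|=d(e,g)$ in the bound $-2r/d\le\kappa^\S_r\le 2r/d$ there gives $|\kappa(g)|\le 2/|g|$ for every $g\neq e$. So the curvature is small precisely where $|g|$ is large, and the whole proposition should follow from the fact that, in an infinite group, almost all of any large ball consists of elements of large word length. Write $s_k=|S_k|$ and $b_n=|B_n|$, and let the average curvature be $A_n=\frac{1}{b_n-1}\sum_{g\in B_n,\,g\neq e}\kappa(g)$, the identity being omitted since $\kappa(e)$ is undefined. Grouping the sum by word length and applying the pinching bound termwise yields $|A_n|\le \frac{1}{b_n-1}\sum_{k=1}^n \frac{2s_k}{k}$, so it suffices to show that this upper bound tends to $0$.

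First I would note that $b_n\to\infty$: since $|S|<\infty$ each ball is finite, and since $G$ is infinite the nested union $\bigcup_n B_n=G$ cannot stabilize, so $b_n$ is unbounded. This is the only place infiniteness of $G$ enters, and it is exactly what makes the denominator grow.

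The main step is then the elementary Ces\`aro-type estimate that $\frac{1}{b_n}\sum_{k=1}^n \frac{s_k}{k}\to 0$, which I would prove by splitting at a threshold. Given $\varepsilon>0$, choose $m$ with $2/m<\varepsilon$. Then $\sum_{k=1}^n \frac{2s_k}{k}=\sum_{k=1}^m \frac{2s_k}{k}+\sum_{k=m+1}^n \frac{2s_k}{k}\le C_m+\frac{2}{m+1}\sum_{k=m+1}^n s_k\le C_m+\varepsilon\, b_n$, where $C_m=\sum_{k=1}^m 2s_k/k$ does not depend on $n$. Dividing by $b_n-1$ and letting $n\to\infty$, the first term dies because $b_n\to\infty$, while the second stays below $\varepsilon\cdot\frac{b_n}{b_n-1}$, so $\limsup_n|A_n|\le\varepsilon$; as $\varepsilon$ was arbitrary, $A_n\to 0$.

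I do not expect a genuine obstacle here: the content is carried entirely by the already-proved bound $|\kappa(g)|\le 2/|g|$, and the remaining work is the threshold split, whose only subtlety is ensuring that the fixed ``head'' $C_m$ is swamped by $b_n\to\infty$ rather than by any growth assumption on the spheres. In particular no isoperimetric or growth hypothesis on $(G,S)$ is needed, only that $G$ is infinite.
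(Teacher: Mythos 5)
Your proposal is correct and follows essentially the same route as the paper: both rest on the pinching bound $|\kappa(g)|\le 2/|g|$, split the ball at a fixed threshold (your spheres $S_k$ with $k\le m$ are exactly the paper's $B_k$), kill the fixed head using $|B_n|\to\infty$, and conclude by arbitrariness of the threshold. Your write-up is merely more explicit about the minor points the paper leaves implicit (excluding the identity, and verifying $|B_n|\to\infty$ from infiniteness of $G$).
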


\begin{proof}
	The average curvature in the group $G$ is the limit $$\lim\limits_{n\to \infty}\frac{\sum_{g\in B_n}\kappa(g)}{|B_n|}.$$ 
Since $\kappa(e)=0$ and otherwise
$|\kappa(g)|\leq \frac{2}{|g|}$, we see that for any fixed $k$ we have 
	$$\lim\limits_{n\to \infty}\frac{\sum_{g\in B_n}\kappa(g)}{|B_n|} \leq \lim\limits_{n\to \infty}\frac{|B_k| + \frac{2}{k}|B_n\smallsetminus B_k|}{|B_n|} \leq \frac{2}{k}.$$ 
Since this holds for arbitrary $k$,  the average curvature tends to 0.
\end{proof}

\begin{Example}[Dependence on generators in the symmetric group]

We will illustrate a simple observation about the dependence on choice of generators
by considering the case of the symmetric group $\Symm(n)$.
Recall from Proposition~\ref{properties}\eqref{kappa0} that with respect to $S=G\ssm \{e\}$, we get $\kappa\equiv 0$.
If we focus, for example, on the group element $\sigma=(1\dots n)\in\Symm(n)$, then we will be able to
manipulate the sign of $\kappa(\sigma)$ by building an appropriate generating set.

\begin{Proposition}[Manipulating the sign]\label{symm-sign}\label{prop:symm-sign} 
Let $\sigma=(1\dots n)$ be the basic $n$-cycle in 
the symmetric group $\Symm(n)$.  
Given any  $\epsilon>0$, for sufficiently large  $n$ there exist two generating sets
$\Spos$ and $\Sneg$
 for $\Symm(n)$ such that 
 \begin{itemize}
\item with respect to $\Spos$, we have  $|\sigma|=2$,  $\Av(\sigma)<1+\epsilon$, and 
  $\kappa(\sigma)>\frac{1-\epsilon}{2}$ \ ; \quad  and 
\item with respect to $\Sneg$, we have  $|\sigma|=1$, $\Av(\sigma)>3-\epsilon$, and 
$\kappa(\sigma)<{-2+\epsilon}$.
\end{itemize}
\end{Proposition}
\begin{proof}
	Let $\Spos(n) = \Symm(n)\ssm\{e, \sigma\}^\pm$, so that $|\Spos|=n!-3$. 
	Consider the action of $\Symm(n)$ 
  	on itself by conjugation. We must consider the length $s^{-1}\sigma s$ for $s\in S$. If $s^{-1}\sigma s\in \{\sigma, \sigma^{-1}\}$, then we can see $|s^{-1}\sigma s| = 2$. This happens if $s$ commutes with $\sigma$ or conjugates it to $\sigma^{-1}$ in the first case there are $n-3$ elements of $S$ with this property. In the second case there are $n$ elements of $S$. In all other cases $s^{-1}\sigma s\in S$ and so has length 1. 
 	Thus we have
$$\Av(\sigma) = \frac{1}{n!-3}\bigl( (n!-3-n-(n-3)\bigr) + 2(n+n-3)\bigr)=\frac{n!+2n-6}{n!-3} \longrightarrow 1.$$

	Let $\Sneg(n) = \{(12), (23), \dots(n\! -\! 1 \  n), \sigma^\pm \}$, so that $|\Sneg|=n+1$. 
	Consider $|s^{-1}\sigma s|$ for $s\in \Sneg$. If $s =\sigma^{\pm}$, then $|s^{-1}\sigma s| = 1$. Since $\sigma$ is not fixed by any transposition we see that $|s^{-1}\sigma s| = 3$ otherwise. Thus, 
	$$\Av(\sigma) = \frac{2+3(n-1)}{n+1} \longrightarrow 3. \qedhere$$ 
\end{proof}

\end{Example}

\begin{Example}[Free groups with standard generators]\label{freecurv}
  Let $F_n$ be the free group with $n$ standard generators, and let 
  $w=a_1\cdots a_m$ be a freely reduced word of length $m$, with $a_i$ not necessarily distinct. If this spelling is not 
  cyclically reduced, 
  i.e.,  $a_1= a_m^{-1} =x$, then 
  $\Av(w) = \frac{(2n-1)(m+2)+(m-2)}{2n}$ 
  where the first term corresponds to conjugation by all generators 
  other than $x$, and the second term corresponds to conjugation by $x$. 
  If $w$ is cyclically reduced, then 
  $\Av(w) = \frac{(2n-2)(m+2)+m+m}{2n}$. In either case, 
  $\Av(w) = m+2-\frac{2}{n}$. Thus, 
  $\kappa(w) = 1- \frac{\Av(w)}{|w|}=\frac{2}{mn}-\frac{2}{m}
  =\frac{2-2n}{mn}$ 
  which is negative for $n\ge 2$, and $0$ for $n=1$ (the case of the elementary free group $\Z$).

We have established that $\Av(w)$ and $\kappa(w)$ both depend only on $|w|$ and the rank of the free group.
We get {\em negative curvature at all non-identity points} in free groups of rank at least two.
\end{Example}

\begin{Example}[$F_2\times \Z$]

 If $G = F_2\times \Z=\braket{a,b,z : [a,z]=[b,z]=e}$, 
      with $S = \{a,b,z\}^\pm$, then a 
      straightforward calculation shows that 
      $\Av(a) =\Av(b)= \frac{5}{3}$, so $\kappa(a) =\kappa(b)= -\frac{2}{3}$, as follows.
Conjugating $a$ by $a^\pm$ or $z^\pm$ doesn't change its length, while conjugates by $b^\pm$ 
have length 3.  Thus $\Av(a)=\frac{1+1+1+1+3+3}6=\frac 53$.  The case of $b$ is similar.
That means that 
$\kappa=-2/3$ on the sphere, or $-3/7$ on the ball.  
Thus $F_2\times \Z$ has {\em some points of zero curvature and some points of negative curvature} 
(and no points of positive curvature) in its standard generators, as you'd expect in a 
discrete model of ${\rm Tree}\times \R$.
\end{Example}

%
%
%

\begin{Example}[General RAAGs]\label{ex:RAAG}

Right-angled Artin groups, or RAAGs, are groups in which the only relations are commuting relations between certain
generators.  This is encoded in a simple undirected graph $\Gamma$ for which the vertices represent generators
and the edges appear between commuting pairs.  For instance the free group $F_n$ is a RAAG defined by a graph with no 
edges, and the previous example $F_2\times \Z$ correspond to three vertices arranged in a path.

We will use the fact that RAAGs have a natural normal form:  a string is geodesic iff it does not contain a substring of the form
 $x^{-1}(y_1y_2\dots y_n)x$ where $x, y_i$ are generators and $x, y_i$ commute for all $i$. 

\begin{Proposition}[Curvature dichotomy for RAAGs]\label{prop:RAAGs}
	Let $A_{\Gamma}$ be the RAAG on the graph $\Gamma$ with the standard generating set $S = V(\Gamma)^{\pm1}$. Then 
	$\kappa(g)=0$ iff $g$ is central; else, $\kappa(g)<0$.  
\end{Proposition}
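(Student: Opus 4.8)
The plan is to rephrase the statement entirely in terms of the conjugation average: since $\kappa(g) = \frac{|g| - \Av(g)}{|g|}$, it suffices to prove that $\Av(g) \ge |g|$ for every $g$, with equality exactly when $g$ is central. The central direction is already recorded in Proposition~\ref{properties}\eqref{central}, so the content is (i) the inequality $\Av(g)\ge|g|$ and (ii) the implication $\Av(g)=|g| \Rightarrow g\in Z(A_\Gamma)$. I would organize everything around the single-generator increment $\ell_a(g) := |a^{-1}ga| - |g|$, so that $\Av(g)-|g| = \frac{1}{|S|}\sum_{a\in S}\ell_a(g)$. Multiplying by one generator changes length by exactly $\pm 1$ (the total-exponent homomorphism $A_\Gamma\to\Z$ forces $|ga|\not\equiv|g|\bmod 2$), so applying this twice gives $\ell_a(g)\in\{-2,0,+2\}$.

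Next I would pin down $\ell_a(g)$ using the geodesic normal form quoted above. Writing $L(g)$ and $R(g)$ for the sets of generators that can occur as the first, respectively last, letter of a geodesic spelling of $g$, I expect to show: if $a$ commutes with $g$ then $\ell_a(g)=0$ (the prepended $a^{-1}$ slides through $g$ and annihilates the appended $a$); and otherwise
\[
\ell_a(g) \;=\; 2 - 2\,[\,a\in L(g)\,] - 2\,[\,a^{-1}\in R(g)\,],
\]
since the only available cancellations are the leading $a^{-1}$ against an initial $a$ and the trailing $a$ against a terminal $a^{-1}$. Two structural features of the normal form are the engine here, and I would isolate them as lemmas: for each generator $x$, the set $L(g)$ (and likewise $R(g)$) contains at most one of $x, x^{-1}$ — otherwise $g$ would admit a geodesic beginning $x x^{-1}\cdots$ — and any two elements of $L(g)$ commute, because two distinct possible first letters can be written in either order.

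With these in hand the inequality becomes a pairing argument. Grouping the sum over $S$ into the pairs $\{x, x^{-1}\}$, a pair in which $x$ commutes with $g$ contributes $0$, while a non-commuting pair contributes $4 - 2\big(\#\{x^{\pm}\in L(g)\} + \#\{x^{\pm}\in R(g)\}\big) \ge 0$, using that each count is at most $1$. Summing gives $\Av(g)\ge|g|$, and identifies the equality case precisely: $\Av(g)=|g|$ iff for every vertex $x$ not commuting with $g$, exactly one of $x,x^{-1}$ lies in $L(g)$ and exactly one lies in $R(g)$.

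The last step, turning that equality condition into centrality, is where I expect the real work. Suppose $g$ is non-central and let $N$ be the (nonempty) set of vertices not commuting with $g$. The equality condition makes every $v\in N$ a possible first letter, so $N\subseteq\mathrm{supp}(g)$ and, since distinct first letters commute, $N$ is a clique in $\Gamma$. Now I would invoke the standard description of centralizers in a RAAG: a generator $v$ commutes with $g$ iff $\mathrm{supp}(g)\subseteq\mathrm{st}(v)$. Applied to any $v\in N$, this produces some $w\in\mathrm{supp}(g)$ with $w\neq v$ and $w\not\sim v$; but then $v\in\mathrm{supp}(g)\setminus\mathrm{st}(w)$ forces $w\in N$ as well, contradicting that $N$ is a clique. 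Hence no non-central $g$ can satisfy the equality condition, so $\kappa(g)<0$ off the center. The main obstacles I anticipate are the bookkeeping in the $\ell_a$ computation — especially isolating the commuting/``slide-through'' case from the two end-cancellations so the formula is genuinely exhaustive — and correctly invoking the support–centralizer dictionary, which is the one nontrivial input beyond the normal form.
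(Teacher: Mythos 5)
Your proposal is correct, and its core is the same as the paper's: you reduce to showing $\Av(g)>|g|$ for non-central $g$, work with the sets of possible first/last letters of geodesic spellings (your $L(g)$, $R(g)$ are the paper's $\mathrm{Pref}(g)$, $\mathrm{Suff}(g)$), use the same two structural facts (each set contains at most one of $x,x^{-1}$; each spans a clique), and organize the estimate around the pairs $\{x,x^{-1}\}$ --- your inequality $\ell_x(g)+\ell_{x^{-1}}(g)\ge 0$ is precisely the paper's $|x^{-1}gx|+|xgx^{-1}|\ge 2|g|$.

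Where you genuinely diverge is the endgame. The paper exhibits one strict generator directly: non-centrality yields a letter $x$ with $\{x\}\cup\mathrm{Pref}(g)$ not a clique, so $xg$ and $x^{-1}g$ are both geodesic while at most one of $x,x^{-1}$ lies in $\mathrm{Suff}(g)$, making that single pair strict. You instead pin down the exact equality case ($\Av(g)=|g|$ iff every vertex not commuting with $g$ contributes exactly one signed letter to $L(g)$ and one to $R(g)$) and then rule it out via the support--centralizer dictionary: $v$ commutes with $g$ iff $\mathrm{supp}(g)\subseteq\mathrm{st}(v)$. That dictionary is a standard RAAG fact but an external input the paper never names; on the other hand, the paper's own assertion that such an $x$ exists is itself left unjustified, and proving it requires an argument of the same flavor (if $\{x\}\cup\mathrm{Pref}(g)$ were a clique for every letter $x$, then inductively every letter of a geodesic spelling could be commuted to the front, so every letter of $g$ would be adjacent to all other vertices and $g$ would be central). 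So neither route is strictly more elementary; yours buys more --- the exact increment formula $\ell_a(g)=2-2\,[a\in L(g)]-2\,[a^{-1}\in R(g)]$ for $a$ not commuting with $g$, and a characterization of the equality case rather than just its impossibility --- at the cost of the heavier bookkeeping you flag, all of which does check out.
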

\begin{proof}
In light of Proposition~\ref{properties}\eqref{central}, it suffices to show that if $g$ is not central then $\kappa(g)<0$.
Fix such a $g$.
We will show that for all generators $t\in S$, 
 $|t^{-1}gt|+|tgt^{-1}| \geq 2|g|$, and there exists a generator making this inequality strict. 
	
Consider the ``prefixes" and ``suffixes" of geodesics:
let ${\rm Pre}(g) = \{v\in S \mid g = vh$ is a geodesic spelling of $g\}$ and 
${\rm Suf}(g) = \{v\in S \mid g = hv$ is a geodesic spelling of $g\}$. 
These are the letters that can be commuted to the front or the back of a geodesic spelling for $g$, respectively.
By the normal forms for RAAGs, these sets are well defined, and 
neither ${\rm Pre}(g)$ nor ${\rm Suf}(g) $ can contain both $v$ and $v^{-1}$. Note that all the letters in ${\rm Pre}(g)$
(respectively ${\rm Suf}(g)$) commute with one another, so the corresponding vertices form a clique in $\Gamma$.
These sets will prove useful in calculating word length. 

For $t\in S$, note that  $|gt| = |g| \pm 1$, with 
 $|gt|=|g|-1$ if and only if $t^{-1}\in {\rm Suf}(g) $.  Similarly, $|t^{-1}g| = |g|+1 \iff t\notin {\rm Pre}(g)$. 
Considering all the possibilities for whether $t^\pm$ are in ${\rm Pre}(g)$ or ${\rm Suf}(g) $ 
one sees that $|t^{-1}gt|+|tgt^{-1}| \geq 2|g|$. 
Finally, since $g$ is not central, we can find a letter $x$ such that $\{x\}\cup {\rm Pre}(g)$ is not a clique, 
and that means that both $xg$ and $x^{-1}g$ are geodesic.  Since both $x$ and $x^{-1}$ can't be in 
${\rm Suf}(g)$,  this gives  $|x^{-1}gx|+|xgx^{-1}| > 2|g|$. 
\end{proof}
\end{Example}

\section{Virtually abelian groups}
We have seen that no group can have positive curvature at all nonidentity points, since  
Proposition~\ref{properties}\eqref{avgen}
observes that $\kappa(a)\le 0$ for $a\in S$.  Now we will consider the consequences of identically zero curvature.

\begin{Theorem}[Flat generators implies virtually abelian]\label{thm:0iffvabelian}\label{thm:vabelian}
For a group $(G,S)$, suppose $\kappa(a)=0$
  for all $a\in S$. Then $G$ is virtually abelian.
\end{Theorem}
\begin{proof}
  Since $\kappa(a)=0$ we have 
  that $\Av(a)=1$ for all $a\in S$. In particular, this 
  means that the action of $G$ by conjugation stabilizes 
  $S$ as a set. Consequently, since $S$ is finite, it follows 
  that the stabilizer of a point $a\in S$ is finite index in $G$. 
Since $G$ is acting by conjugation, $\mathrm{Fix}(a) = Z(a)$. 
  Thus, for every $a\in S$, $Z(a)\stackrel{f.i.}{\le} G$. 
  Since $S$ generates $G$, it follows that $Z(G) = \cap_{a\in S} Z(a)$, 
  and this group is finite index in $G$ and abelian.
\end{proof}

As a corollary, we get that groups of constant zero curvature are virtually abelian. 

One might hope that there is a converse, namely that given a virtually abelian group we can find a generating set where every point has $\kappa=0$. We will see soon that this is not possible; however, we do have the following. 

\begin{Theorem}[Virtually abelian implies virtually flat]\label{VA-genset}\label{thm:vabelian2}
  Let $G$ be virtually abelian, so that $H\le G$, $H\cong \Z^n$ is a finite-index normal free abelian subgroup of $G$. 
  Then there exists a generating set $S$ of
  $G$ such that $\kappa(h)=0 \quad \forall h\in H$.
\end{Theorem}
\begin{proof}
	There exists a finite group $F$ such that $G$ fits into the sequence
	$$H\hookrightarrow G \twoheadrightarrow F. $$
	
	Since $G$ acts on $H$ by conjugation, there is a well defined map $\phi\colon F\to Out(H) = GL_n(\Z)$. 
	Let $U$ be a set containing a lift of each non-identity element of $F$ and close it under inversion. Let $T$ be a finite generating set for $H$.
	Let $V = \{uvw\mid u, v, w\in U,  uvw\in H\}\cup \{uv\mid u, v\in U, uv\in H\}\cup T$. 
	and let $T' = \{u^{-1}vu\mid t\in V, u\in \langle U\rangle\}$ be the set of conjugates of elements of $T$ by elements of $\langle U\rangle$. This is a finite set since the orbits of $\langle U\rangle$ in $H$ are finite. 
	Let $S = U\cup T'$; by construction $S$ generates $G$. 
	We will show that conjugating by $s\in S$ does not change the word length of any $h\in H$.
	
	Let $x_1\dots x_l$ be a spelling for $h$ that is geodesic with respect to $S=U\cup T'$. Since $T'$ is preserved by the conjugation action of $U$ we can assume that all but the first $k$ letters are in $T'$. We will show that $k=0$. 
	
	Consider the word $x_1\dots x_k$, note that since this represents an element of $H$ we know that $k\neq 1$. First we can see that no subword of the form $x_ix_{i+1}$ gives an element of $H$ as we could replace this with a generator from $T'$. Looking at the final subword $x_{k-1}x_k$ we can see there is an element $y$ such that $yx_{k-1}x_k$ is in $H$. We can then replace $x_{k-1}x_k$ by $y^{-1}t$ for the element $t = yx_{k-1}x_k\in T'$. This allows us to reduce $k$ by 1 without changing the length of our word. We can thus keep reducing until we arrive at the case that $k=1$ giving a contradiction.
	
	Since $H$ is a normal subgroup, $T'$ is contained in $H$, which is abelian. 
	Therefore generators in $T'$ commute with $h$, so conjugation by $T'$ generators does not change the word length.
	
	Next consider conjugation by $u\in U$. Let $x_i^u=u^{-1}x_iu$, and note that this is in $T'$ so its word length is 1 with respect to $S$ generators. We have a spelling for the  group element $u^{-1}hu$, namely $x_1^u\dots x_l^u$, so $|u^{-1}hu|\le |h|$.
	Conversely, if $y_1\dots y_m$ is a word representing $u^{-1}hu$, then $y_1^{u^{-1}} \dots y_m^{u^{-1}}$ is a word representing $h$, so $|h|\le |u^{-1}hu|$.
	Thus conjugation fixes the length of every word in $H$ and the curvature $\kappa(h)=0$ for all $h\in H$.
\end{proof}	

In the case where the group is virtually $\Z$, we can remove the dependence on generating set.

\begin{Proposition}\label{prop:zhaszeroes}
If $N$ is a normal infinite cyclic subgroup of $(G,S)$, 
then $\kappa(g) = 0$ for all $g\in N$. 
\end{Proposition}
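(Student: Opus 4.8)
The plan is to exploit the rigidity of $\mathrm{Aut}(\Z)$: the only automorphisms of an infinite cyclic group are the identity and inversion. First I would fix a generator of $N$, writing $N = \langle t\rangle$ with $t$ of infinite order, so that every element of $N$ has the form $g = t^k$ for some $k\in\Z$. Since $N$ is \emph{normal}, conjugation by any element of $G$ restricts to an automorphism of $N\cong\Z$. In particular, for each generator $a\in S$ we have $a^{-1}ta = t^{\epsilon(a)}$ with $\epsilon(a)\in\{+1,-1\}$. This is the whole engine of the argument: normality forces the conjugate of $t$ to land back in $N$, and the structure of $\Z$ forces it to be $t^{\pm 1}$ rather than some unrelated power.

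Next I would propagate this to arbitrary powers and then pass to word length. For $g = t^k$ we compute $a^{-1}ga = (a^{-1}ta)^k = t^{\epsilon(a)k} = g^{\epsilon(a)}$, so conjugation by any generator sends $g$ to either $g$ itself or $g^{-1}$. The final ingredient is that word length is invariant under inversion: because $S=S^{-1}$, reversing a geodesic spelling $g = s_1\cdots s_m$ and inverting each letter gives a spelling of $g^{-1}$ of the same length, so $|g^{-1}| = |g|$. Combining these two observations yields $|a^{-1}ga| = |g^{\epsilon(a)}| = |g|$ for \emph{every} $a\in S$.

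From here the conclusion is immediate. Averaging over $S$ gives $\Av(g) = \frac{1}{|S|}\sum_{a\in S}|a^{-1}ga| = |g|$, and hence $\kappa(g) = \frac{|g|-\Av(g)}{|g|} = 0$ for every nonidentity $g\in N$ (the identity being conventionally flat). I do not expect any real obstacle: the only step requiring genuine care is the identification of conjugation on $N$ with an automorphism of $\Z$, which is exactly where normality is used and is what pins the conjugates down to $g^{\pm1}$. Everything else is the general length-symmetry of Cayley graphs under a symmetric generating set.
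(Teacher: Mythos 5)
Your proposal is correct and follows essentially the same route as the paper: normality makes conjugation by any generator an automorphism of $N\cong\Z$, which must be the identity or inversion, and either one preserves word length. The paper's proof is just a terser version of yours; your added detail that $|g^{-1}|=|g|$ follows from $S=S^{-1}$ is exactly the implicit step the paper leaves to the reader.
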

\begin{proof}
	Take an element $g\in N$ and any generator $s\in S$. Since $N\cong \Z$ is normal we see that conjugation by $s$ gives an automorphism of $\Z$ which is either the identity or inversion. In either case this preserves the length of the element $g$. 
\end{proof}

On the other hand, the dependence on generators obstructs the possibility of a complete converse to Theorem \ref{thm:0iffvabelian}, which we see in the next two results.

\begin{Proposition}\label{virtabelpos}\label{prop:vabelian3}
	There is a virtually abelian group $(G,S)$ such that every finite index free abelian subgroup has a point of positive curvature and a point of negative curvature.
\end{Proposition}
\begin{proof}
	Let $G = \langle a, b, t\mid [a, b], t^{-1}at = ab, t^{-1}bt = a^{-1}, t^6\rangle$ with generating set $\{a, b, t\}^{\pm1}.$ This group is isomorphic to $\Z^2\rtimes \Z/6\Z$. 
	The subgroup $\langle a, b\rangle  = \Z^2$ is a normal abelian subgroup and any finite index free abelian subgroup is a subgroup of this. 
	
	We will show that the element $(ab)^n$ has positive curvature and the element $a^n$ has negative curvature for all $n\ge 1$. 
	
	The element $w=(ab)^n$ can be written as a geodesic using the spelling $t^{-1}a^nt = w$. So this word has length $n+2$. Since $w=(ab)^n\in \Z^2$, conjugating by the generators $a, b$ does not change the length. We must now consider the conjugates by $t$ and $t^{-1}$. Firstly, $twt^{-1} = tt^{-1}a^ntt^{-1} = a^n$ and that has length $n$.
	Finally, $t^{-1}wt = t^{-2}a^nt^2 =t^{-1}(ab)^nt= b^n$ which also has length $n$. We conclude that $\Av(w) = n + \frac{4}{3} < n+2$, thus $\kappa(w)>0$. 
	
	The element $v = a^n$ is a geodesic and so we work with this spelling. Conjugating by elements of $\langle a, b\rangle$ will not change the length. Conjugating by $t$ we get $t^{-1}vt$ is a geodesic and has length $n+2$. Conjugating by $t^{-1}$ we get $tvt^{-1} = b^{-n}$, which is a geodesic and has length $n$. 
	This gives $\Av(v) = n + \frac{1}{3}>n$, thus $\kappa(v)<0$. 
\end{proof}

\begin{Proposition}\label{prop:dihedral} 
Let $D_\infty=\Z_2\ast \Z_2$, which is virtually $\Z$.
This group has no finite generating set giving it constant zero curvature. 
\end{Proposition}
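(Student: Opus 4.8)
The plan is to argue by contradiction, using the fact that the hypothesis $\kappa\equiv 0$ forces the generating set to be invariant under conjugation, and then showing this is incompatible with the conjugacy structure of $D_\infty$. It is convenient to write $D_\infty=\langle s,t : s^2=e,\ sts^{-1}=t^{-1}\rangle$, so that $N=\langle t\rangle\cong\Z$ is normal of index two and the nontrivial coset consists of the reflections $t^n s$ ($n\in\Z$), each of order two. By Proposition~\ref{prop:zhaszeroes} every element of $N$ already has zero curvature for \emph{any} generating set, so the entire content of the statement lives in the reflections; the goal is to show they obstruct constant zero curvature.

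First I would record the consequence of zero curvature on generators, exactly as in the proof of Theorem~\ref{thm:0iffvabelian}. Suppose some finite $S$ gives $\kappa(g)=0$ for all $g$; then in particular $\Av(a)=1$ for every $a\in S$. Since $e\notin S$, each summand $|s^{-1}as|$ is at least $1$, so $\Av(a)=1$ forces $|s^{-1}as|=1$, i.e. $s^{-1}as\in S$, for every pair $a,s\in S$. Thus conjugation by each generator permutes the finite set $S$, and since $S$ generates $G$, conjugation by any element of $G$ permutes $S$. In other words $S$ is closed under conjugation, hence a union of conjugacy classes.

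The contradiction then comes from the conjugacy structure. Because $S$ generates $G$ it cannot be contained in the index-two subgroup $N$, so $S$ must contain a reflection $\rho=t^m s$. The relation $s t^{-1}=ts$ gives $t\rho t^{-1}=t^{m+1}st^{-1}=t^{m+2}s$, so conjugating $\rho$ by the powers of $t$ produces the infinitely many distinct reflections $\{t^{m+2j}s : j\in\Z\}$, all lying in the conjugacy class of $\rho$ and hence, by the previous paragraph, in $S$. This contradicts $|S|<\infty$, and the proof is complete.

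The crux — really the only nontrivial point — is the first step, the equivalence between zero curvature on generators and conjugation-invariance of $S$; this is the same mechanism underlying Theorem~\ref{thm:0iffvabelian}, here specialized to a group whose center is trivial. Once that is in place the argument is routine, and I would only take care to check the two elementary facts that make $D_\infty$ work: that $S$ genuinely must contain a reflection (otherwise $\langle S\rangle\le N\subsetneq G$), and that each reflection has an infinite conjugacy class, both of which follow immediately from the displayed relations.
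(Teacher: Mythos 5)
Your proof is correct, but it diverges from the paper's argument at the crucial step. Both proofs begin identically: constant zero curvature forces $\Av(a)=1$ for each $a\in S$, and since every conjugate $s^{-1}as$ is a non-identity element, each must have length exactly $1$, so conjugation by generators preserves $S$. From there the paper stays with the presentation $\langle a,b\mid a^2,b^2\rangle$ and only ever conjugates by elements of $S$ itself: it shows $S$ must contain a word starting and ending in the same letter, then runs a case analysis on the reduced form of a second generator ($t=b$, $bvb$, $bva$, $ava$), in each case exhibiting an explicit infinite family of elements forced into $S$ by alternating conjugations. You instead make one further observation the paper does not use: since $S$ generates $G$ and conjugation by each generator preserves $S$, conjugation by \emph{every} element of $G$ preserves $S$, so $S$ is a finite union of full conjugacy classes. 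This upgrade lets you conjugate by the rotation $t$ (which need not lie in $S$) and collapses the paper's casework into a single computation: $S$ cannot lie in the index-two subgroup $\langle t\rangle$, so it contains a reflection $t^m s$, whose conjugacy class $\{t^{m+2j}s : j\in\Z\}$ is infinite. Your route is shorter and more conceptual, and it isolates the real mechanism: a finite conjugation-invariant generating set must lie in the FC-center (the elements with finite conjugacy class), which in $D_\infty$ is the proper subgroup $\langle t\rangle$; this formulation generalizes immediately to any group whose FC-center is a proper subgroup, whereas the paper's word-by-word analysis is self-contained but specific to $D_\infty$. All the supporting computations you flag ($st^{-1}=ts$, hence $t(t^ms)t^{-1}=t^{m+2}s$; reflections generate outside $\langle t\rangle$) are correct.
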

\begin{proof}
	First consider $D_\infty = \Z_2\ast \Z_2$ with the standard presentation $\langle a, b\mid a^2, b^2\rangle$. Every element of this group can be represented uniquely
	and geodesically as a positive word in $a$ and $b$ with no occurrence of $a^2$ or $b^2$. 
(The cyclic group generated by $ab$ has index 2, with $D_\infty = \langle ab\rangle \sqcup a\langle ab \rangle$.)
	
	If $D_\infty$ had a finite generating set $S$ with zero curvature everywhere, then $S$ would be preserved by the conjugation action on itself, i.e.,
	 $s^{-1}ts\in S$ for all $s, t\in S$.  (As in the proof of Theorem~\ref{thm:0iffvabelian}, this is because conjugation can't carry a generator to the identity, so it can only lengthen the generators, 
	 but if any of them strictly increases in length, then $\Av$ will go up and curvature will be negative.)
	
	If every element of $S$ is a word which starts and ends in different letters, then $S$ is contained in the subgroup $\langle ab \rangle$, because
	$ba=(ab)^{-1}\in \langle ab \rangle$.  Since $\langle ab \rangle$ is a proper subgroup, we conclude that 
	there is an element $s\in S$ which starts and ends with the same letter, say $a$. There are two cases:
	 either $s=a$ or $s=awa$ for some nonempty $w$. We will deal with the case that $s=a$; the other case works the same way.
	
	There is at least one more element $t\neq s$ of $S$ since $D_\infty$ has rank 2. 
	Up to replacing $t$ by $t^{-1}$, there are four cases to check: freely reduced spellings
	$t = b, t = bvb, t = bva$ or $t = ava$ for strings $v\in S^*$.
	
	Firstly, if $t=b$, then $t^{-1}st = bab\in S$ and $s^{-1}(t^{-1}st)s = ababa\in S$. Repeatedly conjugating by $t$ and then $s$ we find more elements that must belong to $S$, contradicting finiteness. The proof for $t=bvb$ is exactly the same. 
	
	If $t=bva$, then we can look at $t^{-i}st^i = (av^{-1}b)^ia(bva)^i$ which is reduced and must be in $S$ for all $i$, showing again that $S$ would be infinite. 
	
	Finally, if $t = ava$, then we can start by conjugating by $a$ to see that $v\in S$. Since $t$ was a reduced word, we see that $v$ starts and ends with $b$,
	which is a case that has already been treated.
\end{proof}

\section{Nilpotent groups}

Finitely generated groups are called nilpotent when nested commutators are killed after a certain depth:
we call $s$ the step of nilpotency if it is the  index at which the lower central series terminates;
in other words, if $G_1=G$ and $G_{i+1}=[G_i,G]$, we have 
$$1=G_s\trianglelefteq \dots \trianglelefteq G_1=G.$$
This means that abelian groups are 1--step nilpotent.  
The next simplest example, the free abelian group of rank two and step two---the Heisenberg group---has standard presentation $H(\Z)=\langle a,b : [a,b] ~\hbox{\rm is central} \rangle$.
From a geometric group theory point of view, nilpotent groups have aspects of flat geometry and
aspects of hyperbolic geometry---for instance, in \cite{duchin-mooney}, it is shown that a positive share of Heisenberg geodesics have a fellow-traveling property associated with hyperbolic geometry, and a positive share have the same instability seen in abelian groups.  On the other hand, they do not have the 
CAT(0) property that is the dominant notion of non-positive curvature in geometric group theory:  they act geometrically on Lie groups with arbitrarily separated pairs of conjugate points, characteristic of positive curvature.  (See also \cite[p252]{bridson_metric_1999}.)  

In this section we see that this is tracked by conjugation curvature.
First, note that every nontrivial nilpotent group has a nontrivial center (including at least $G_{s-1}$),
which means that it has some points with $\kappa=0$.  
We will establish that nilpotent groups can also have infinitely many points of positive and negative curvature,
by finding all three cases ($\kappa=0,>0,<0$) with positive density in the Heisenberg group.

Blach\`ere computed the 
exact formula for the standard word metric on $H(\Z)$  in \cite{blachere} in terms of matrix-entry coordinates.
We will instead use Mal'cev coordinates: even though $a,b$ suffice to generate the group, adding 
$c=[a,b]$ gives a nice normal form $a^A b^B c^C$, with integer exponents $A,B,C$.  
All finitely generated nilpotent groups admit similar Mal'cev coordinates, with the first level corresponding 
to the abelianization $G_1/G_2$ and the subsequent levels corresponding to subgroups with higher degrees of polynomial distortion.  

In the $H(\Z)$ case, we can use coordinates $(A,B,C)\in \Z^3$ to denote $a^A b^B c^C$;
the $\langle c\rangle$ subgroup is quadratically distorted.
Then the word metric obeys the following formula:
if $ A> B>0$ and  $C>0$, then
\begin{equation}\label{heisenlength}\tag{$\star$}
|a^A b^B c^C|=\begin{cases} 
2\lceil C/A \rceil +A+B, & C\le A^2-AB; \\ 
2\lceil 2\sqrt{C+AB}\rceil -A-B,& C\ge A^2-AB. \end{cases}
\end{equation}
Let us call the first case ($C\le A^2-AB$) the {\em low-height} case;
then the {\em high-height} case is $C\ge A^2-AB$.
A distinction like this will work in general nilpotent groups, with the low-height case corresponding to words admitting efficient spellings with first-level Mal'cev letters.

\begin{Theorem}\label{thm:heis}
The Heisenberg group $H(\Z)$ with standard generators $S=\{a,b\}^\pm$ has 
 a positive proportion of points with positive, negative and 
zero curvature, respectively:  there exists $\epsilon>0$ such that
$$\epsilon<\frac{ \#\{g\in B_n : \kappa(g)=0\}}{\#B_n} < 1-\epsilon,$$ 
and the same is true for $\kappa>0$ and $\kappa<0$.
\end{Theorem}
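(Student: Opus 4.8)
The plan is to reduce the curvature to how conjugation by a generator acts on Mal'cev coordinates, and then to read off the sign of $\kappa$ from $(\star)$ on an explicit positive-density family of points. First I would record the conjugation action: writing $g=(A,B,C)$ for $a^Ab^Bc^C$ and using that $c=[a,b]$ is central, a direct computation in the group law gives
\begin{align*}
a^{-1}ga &=(A,B,C-B), & aga^{-1} &=(A,B,C+B),\\
b^{-1}gb &=(A,B,C+A), & bgb^{-1} &=(A,B,C-A).
\end{align*}
Thus conjugation by a generator fixes $A$ and $B$ and merely shifts the central coordinate by one of $\pm A,\pm B$. Writing $\ell(C):=|(A,B,C)|$, this means
\[
\Av(g)-|g| \;=\; \tfrac14\Bigl[\bigl(\ell(C+A)+\ell(C-A)-2\ell(C)\bigr)+\bigl(\ell(C+B)+\ell(C-B)-2\ell(C)\bigr)\Bigr],
\]
one quarter of the sum of two symmetric second differences of word length in the central direction. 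So the sign of $\kappa(g)$ is governed by the discrete convexity of $\ell$, and all the work lies in evaluating these second differences with $(\star)$.

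Next I would specialize to the low-height regime, where by $(\star)$, for $A>B>0$ and $0<C\le A(A-B)$, we have $\ell(C)=2\lceil C/A\rceil+A+B$, depending on $C$ only through $h(C):=\lceil C/A\rceil$. Restricting to interior points with $A<C\le A(A-B)-A$ keeps all four shifts $C\pm A,\,C\pm B$ inside the regime, so $(\star)$ applies to each. Since $h(C\pm A)=h(C)\pm1$ exactly, the $\pm A$ second difference vanishes and
\[
\Av(g)-|g| \;=\; \tfrac12\bigl(h(C+B)+h(C-B)-2h(C)\bigr)\;=:\;\tfrac12 D,\qquad D\in\{-1,0,1\}.
\]
Writing $C=(q-1)A+s$ with $1\le s\le A$, so that $h(C)=q$, a short case check shows that when $A>2B$ the value $D$ equals $+1$, $0$, $-1$ on the residue intervals $\{A-B+1,\dots,A\}$, $\{B+1,\dots,A-B\}$, $\{1,\dots,B\}$, of lengths $B$, $A-2B$, $B$. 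Since $\kappa(g)$ has the opposite sign to $D$, each of $\kappa>0$, $\kappa=0$, $\kappa<0$ occurs on a full residue interval inside every block of $A$ consecutive values of $C$.

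Finally I would convert the block count into a density statement. Restrict further to a fixed open sub-cone $A=\alpha n$, $B=\beta n$ with $(\alpha,\beta)$ in a compact subset of $\{0<\beta,\ \alpha>2\beta,\ \alpha+\beta<1\}$, together with interior low-height $C$ obeying $|g|\le n$. A volume estimate shows this family already has $\asymp n^4$ points, a positive proportion of $B_n$ (recall $\#B_n\asymp n^4$); as $C$ sweeps its $\Theta(n^2)$-long interval it covers $\Theta(n)$ complete blocks of length $A$, so by the residue computation the three sign classes occur within the family in proportions bounded below by constants of the cone (comparable to $\beta/\alpha$, $1-2\beta/\alpha$, $\beta/\alpha$). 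As the family lies inside $B_n$, each of $\kappa>0$, $\kappa<0$, $\kappa=0$ then occupies a positive proportion of $B_n$: taking $\epsilon$ below all three proportions gives the lower bound, and since $\kappa\neq0$ has positive proportion it also gives the upper bound $1-\epsilon$ for the zero set.

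The main obstacle is this bookkeeping rather than any single inequality. One must check that the low-height interior family is genuinely of order $n^4$ and that every discarded stratum is of strictly smaller order: points whose conjugates leave the low-height regime, points within $O(A)$ of the boundary $C=A(A-B)$, and the $O(1)$ error from $\lceil\cdot\rceil$. Working in an open sub-cone bounded away from each constraint is exactly what makes these errors $O(n^3)$ and the block-equidistribution of the residue $s$ exact to leading order. The curvature computation itself collapses, once the conjugation formulas are in hand, to the elementary fact that the $\pm A$ second difference of $\lceil C/A\rceil$ vanishes while the $\pm B$ one realizes each of $-1,0,1$ on a definite fraction of residues. (The high-height regime, where $(\star)$ reads $2\lceil 2\sqrt{C+AB}\rceil-A-B$ and concavity of the square root forces the second differences to be mostly nonpositive, only adds further points and is not needed.)
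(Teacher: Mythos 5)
Your proposal is correct and follows essentially the same route as the paper's own proof: the same conjugation formulas in Mal'cev coordinates, the same observation that the $\pm A$ shifts (conjugation by $b^{\pm}$) cancel exactly so only the $\pm B$ shifts determine the sign, the same residue-class analysis of $C \bmod A$ in the low-height regime (your three intervals match the paper's Cases 1--3), and the same sector-plus-volume count to extract positive density in $B_n$. Your second-difference packaging and the explicit interiority condition $A<C\le A(A-B)-A$ are slightly tidier bookkeeping, but not a different argument.
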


\begin{proof}First let us describe the ball $B_n$ in terms of the coordinates $(A,B,C)$ of its elements.
For each  $(A,B)$ satisfying $|A|+|B|\le n$, the point $(A,B,C)$ belongs to $B_n$ iff $C$ satisfies a certain
quadratic polynomial inequality in $A,B$---this can be read off of the word-length formula with a little bit of 
work (because of the square root appearing in the high-height case and the quadratic bound for the low-height case), 
or is described directly in \cite{duchin-mooney}.

Now let $w=a^A b^B c^C\in B_n$ and note what happens when we conjugate by generators $a,b$.  We have
$$a^{-1}(a^A b^B c^C)a= a^A b^B c^{C-B}; \quad
a(a^A b^B c^C)a^{-1}=a^A b^B c^{C+B}; $$
$$b^{-1}(a^A b^B c^C)b= a^A b^B c^{C+A}; \quad
b(a^A b^B c^C)b^{-1}=a^A b^B c^{C-A}.$$

At low height, for $A,B$ fixed, we will show that 
the sign of $\kappa$ only depends on $C \mod A$.  
To see this, note that conjugating by $b^\pm$ changes $C$ by adding $\pm A$.  
By \eqref{heisenlength}, each of these two 
conjugates changes $|w|$ by exactly 2 in opposite directions, which cancels out in the averaging.  Thus only conjugates
by $a^\pm$ contribute to the sign of $\kappa$.  

In particular if $C$ is a multiple of $A$ then  $C/A$ is a whole number, so $\lceil C/A \rceil$ is preserved by $C\mapsto C-B$
and increased by $C\mapsto C+B$, which means that the curvature is negative overall.  
On the other hand, if $C=kA+1$, then $\lceil C/A\rceil$ is preserved by $C\mapsto C+B$ and 
decreased by $C\mapsto C-B$, for positive curvature overall.  
Generally the sign of $\kappa$ in this case (low height; $A,B$ fixed; $C$ varying) only depends on $C$ mod $A$.
To be precise, suppose $C=kA+r$.  Then 
$$\lceil (C+B)/A \rceil = \begin{cases} k+2, & r>A-B\\ k+1, & r\le A-B \end{cases} \quad {\rm and} \quad 
\lceil (C-B)/A \rceil = \begin{cases} k+1, & r>B\\ k, & r\le B. \end{cases}
$$

Within the conditions $A>B>0$, $C>0$, let us further restrict to the sector $\frac 15 A \le B < \frac 25 A$; clearly a positive
proportion of the points in the ball satisfy these inequalities.  Then of the possible remainders $r$ for $C$, 
at least $1/5$ satisfy $1\le r \le B$ (call this Case 1), at least $1/5$ satisfy $A-B<r\le A-1$ (Case 2), and at least $1/5$ satisfy $B<r\le A-B$
(Case 3).  
In all three cases, $\lceil C/A\rceil=k+1$.  In Case 1, $\lceil (C\pm B)/A\rceil= k+1,k$, so $\kappa>0$.
In Case 2, $\lceil (C\pm B)/A\rceil= k+2,k+1$, so $\kappa<0$.  And in Case 3, $\lceil (C\pm B)/A\rceil= k+1,k+1$, so $\kappa=0$.

We have established that the sign of the curvature repeats periodically (mod $A$) at low height.
To complete the proof, we must be sure that for most of the $(A,B)$ in the ball of radius $n$, the low-height values obtained by $C$ 
complete at least one period with respect to $A$, and that the low-height case has positive measure.
First we check that $A^2-AB\ge A$ (the low-height inequality
is satisfied for a full period): this is true for $A\ge 5$ because $\frac 15 A \le B < \frac 25 A$.  
Finally, note that $\int_0^n \int _0^{n-A} (A^2-AB) \ dB\ dA = \frac{n^4}{24}$, which has positive measure since $|B_n|$
is bounded above and below by multiples of $n^4$.  
\end{proof}

\begin{Remark}
At high height, for $A,B$ fixed, there is a  pattern of positive and negative curvature points appearing in an interval of fixed length
surrounding each value of $C$
for which $C+AB=k^2$ is a perfect square.  Outside of these intervals, $\kappa=0$.

This means that in a fixed fiber $(A,B,*)$ of the projection map $\pi\bigl( (A,B,C)\bigr)=(A,B)$, almost every point has zero curvature; on the other hand, we've seen that
 over the ball of radius $n$, a positive proportion of points have $\kappa \neq 0$.
\end{Remark}

We note that this proof will carry over to the Heisenberg group with {\em any} generating set.  
We sketch the general proof here.  
There will still be a low-height/high-height distinction in arbitrary generators, with a positive proportion 
of the ball in low height.  (See \cite{duchin-mooney}, where the low-height case is called {\em unstable} and the proportion is given by a certain volume ratio that is shown to be 
positive and rational.)
Each generating set has a defining polygon $Q$ in the plane given by the convex hull of the projection of its generating set.
(For the standard generators, $Q$ is the diamond $|A|+|B|\le n$.)
It is still the case that conjugation by generators induces a linear change in the $C$ coordinate.
We can restrict to a sector of the defining polygon bounded away from its vertices, as we did here, so that there exists $h>0$
for which $(A,B) \in nQ$ and $C\in [0,hn^2]$ implies $(A,B,C)\in B_n$.   This guarantees that the fibers in this sector
over each $(A,B)$ will see a full period with respect to $A$ and $B$.  
The word length will still be given in  $(A,B,C)$ by a linear function with rounding  in the low-height case.
Thus we will get linear inequalities determining whether $\kappa>0,\kappa<0,$ or $\kappa=0$, with each 
one satisfied for a positive proportion of points in $B_n$.
We expect that an argument like this is adaptable to all finitely generated nilpotent groups via analysis of their Mal'cev bases.

\begin{Remark}
In the 1976 paper  \cite{milnor}, John Milnor shows that 
for any left-invariant Riemannian metric on a nonabelian nilpotent Lie group, there are directions
of strictly positive Ricci curvature and directions of strictly negative Ricci curvature.
In particular, the central direction is positively curved, and non-central directions orthogonal to the commutator subalgebra
are negatively curved.
Note that the asymptotic cone of the word metric $(H(\Z),\{a,b\}^\pm)$ is the Lie group $H(\R)$ with the (left-invariant)
$L^1$ Carnot-Carath\'eodory metric, which is not Riemannian but only sub-Finsler.  
The work here shows that this metric on $H(\R)$ behaves quite differently from the Riemannian ones studied by Milnor:
the $x$-direction is still negatively curved, as in the Riemannian case 
(because the conjugate of $a^n$ by $b^\pm$ has length $n+2$ for $n\ge 1$),
but for us the $z$-direction (the center) has zero Ricci curvature.  
\end{Remark}

\section{Positively curved directions in CAT(0) and hyperbolic groups}

As mentioned above, the CAT(0) condition is a metric-space version of non-positive curvature and 
$\delta$--hyperbolicity is an asymptotic definition of negative curvature; both are widely used in geometric group theory.  While conjugation curvature should be expected to relate nicely to these other notions, it would be too much to ask for the complete absence of points of positive conjugation curvature.  This is unsurprising:  hyperbolicity in particular allows for infinitely many instances of small-scale positive curvature in a metric space.  Nonetheless we will briefly illustrate the point with some constructions intended to illuminate how to work with 
$\kappa$.

This section uses standard constructions like HNN extensions and $C'(\alpha)$ small cancellation, for which a good reference is Bridson and Haefliger \cite{bridson_metric_1999}.

\subsection{A CAT(0) example}

We will build a family of CAT(0) groups which have points of positive curvature.
First we recall the following Proposition from \cite{bridson_metric_1999}.

\begin{Proposition} [Bridson-Haefliger p353, Prop 11.13]
Let $X$ and $A$ be locally CAT(0) spaces. If $A$ is compact and $\phi, \psi\colon A\to X$ are local isometries, then the quotient of $X\sqcup A$ by the equivalence relation generated by $[(a, 0)\sim \phi(a)$ and $(a, 1)\sim \psi(a)]$ is locally CAT(0).
\end{Proposition}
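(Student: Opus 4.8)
The plan is to verify non-positive curvature locally, since by definition a space is non-positively curved precisely when every point has a neighbourhood that is CAT(0). Write $Y$ for the quotient: concretely it is $X$ together with the cylinder $A\times[0,1]$, attached to $X$ by identifying $(a,0)$ with $\phi(a)$ and $(a,1)$ with $\psi(a)$. Since $A$ is compact, the gluing is along compact (hence complete) subspaces, so $Y$ is a complete length space, and by local compactness it is geodesic. It therefore suffices to produce a CAT(0) neighbourhood of each $p\in Y$. For $p$ in the interior of $X$ (away from the images of $\phi,\psi$) a small neighbourhood is isometric to a ball in $X$, which is CAT(0) because $X$ is non-positively curved; for $p\in A\times(0,1)$ a small neighbourhood is isometric to a ball in $A\times[0,1]$, and this is CAT(0) because a product of non-positively curved spaces is non-positively curved and $[0,1]$ is CAT(0). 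The only points requiring work are the gluing points $\phi(a)$ and $\psi(a)$; the two families are symmetric, so I would treat $p=\phi(a)$.

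Near such a $p$ the space $Y$ looks like a ball $U:=\bar B_X(p,\epsilon)$ in $X$ glued to a piece $V:=\bar B_A(a,\epsilon)\times[0,\delta]$ of the cylinder, the gluing being along $\Sigma:=\phi(\bar B_A(a,\epsilon))\subset U$ on the $X$ side and along $\bar B_A(a,\epsilon)\times\{0\}\subset V$ on the cylinder side. For $\epsilon$ small enough that $\bar B_X(p,\epsilon)$ and $\bar B_A(a,\epsilon)$ are convex complete CAT(0) balls (possible since $X$ and $A$ are non-positively curved), both $U$ and $V$ are complete CAT(0) spaces. The subspace $\bar B_A(a,\epsilon)\times\{0\}$ is convex in $V$, because in a product geodesics are products of geodesics and the height coordinate of a geodesic joining two points at height $0$ is constant. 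Granting that $\Sigma$ is a complete convex subspace of $U$ isometric to $\bar B_A(a,\epsilon)$ via $\phi$ (the content of the next paragraph), I would then apply the gluing theorem for complete convex subspaces, the $\kappa=0$ case of \cite{bridson_metric_1999}: gluing two complete CAT(0) spaces along isometric complete convex subspaces yields a CAT(0) space. Choosing $\epsilon,\delta$ so that the glued piece $U\cup_\Sigma V$ contains a genuine $Y$-ball about $p$, this produces the required CAT(0) neighbourhood.

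The crux is the claim that $\Sigma=\phi(\bar B_A(a,\epsilon))$ is convex in $X$, and this is exactly where the hypothesis that $\phi$ is a \emph{local isometry} is used. Because $\phi$ is locally distance preserving, it is injective on a small enough ball and carries each geodesic of $A$ to a locally length-minimizing path of $X$, that is, to a local geodesic; since $X$ is CAT(0), every local geodesic is the unique geodesic between its endpoints. Hence for $a_1,a_2\in\bar B_A(a,\epsilon)$ the $X$-geodesic from $\phi(a_1)$ to $\phi(a_2)$ is the $\phi$-image of the $A$-geodesic from $a_1$ to $a_2$, which lies in $\Sigma$ provided $\bar B_A(a,\epsilon)$ is convex in $A$ — true for small $\epsilon$. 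This gives convexity of $\Sigma$, and completeness follows from compactness of $\bar B_A(a,\epsilon)$. I expect this to be the main obstacle: one must check, using the local-isometry hypothesis, both that $\phi$ genuinely restricts to an isometric embedding on $\bar B_A(a,\epsilon)$ and that it preserves local geodesics, and one must choose $\epsilon,\delta$ uniformly (using compactness of $A$) so that the local models cover a neighbourhood of the entire gluing locus. An alternative route that avoids tracking convex sets is to verify the link condition directly: the link of $\phi(a)$ in $Y$ is the spherical gluing of $\mathrm{Lk}(\phi(a),X)$ with the join $\mathrm{Lk}(a,A)\star\{\mathrm{pt}\}$ along the isometrically embedded $\mathrm{Lk}(a,A)$, and one shows this glued link is CAT(1) — the same difficulty transplanted to the links.
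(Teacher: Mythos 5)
This proposition is not proved in the paper at all: it is quoted verbatim from Bridson--Haefliger \cite{bridson_metric_1999} and used as a black box, so your proposal can only be measured against the standard textbook argument, whose overall shape --- verify local CAT(0) at each point, modelling a neighbourhood of a gluing point as a Reshetnyak-type gluing of CAT(0) pieces along complete convex subspaces --- you have correctly reproduced. Within that outline there are two genuine gaps. The first is that your local model at a gluing point is wrong in general, because $\phi$ and $\psi$ are only \emph{locally} injective. A point $p \in X$ may have several $\phi$-preimages and may simultaneously lie in $\psi(A)$: for instance, let $A$ be a circle of length $2L$, let $X$ be a circle of length $L$, and let $\phi = \psi$ be the double cover, which is a local isometry; then every $p\in X$ has four cylinder flaps attached near it, not one. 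By compactness of $A$ and local injectivity the set $\phi^{-1}(p)\cup\psi^{-1}(p)$ is always finite, but it can have any cardinality, so your single-flap space $U\cup_\Sigma V$ is simply not a neighbourhood of $p$ in $Y$. This is repairable --- attach the finitely many flaps one at a time, checking at each stage that $U$ remains convex in the partial gluing (a path in a flap $\Sigma_i\times[0,\delta]$ with nonconstant height coordinate is strictly longer than its projection to height $0$, so geodesics between points of $U$ never leave $U$) --- but your write-up neither notices the issue nor carries this out; relatedly, it never verifies that the abstract glued model is isometric to an actual $Y$-neighbourhood of $p$, i.e.\ that short paths in $Y$ between points near $p$ cannot leave the model.

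The second gap sits exactly at the step you call the crux. You justify convexity of $\Sigma = \phi(\bar B_A(a,\epsilon))$ by asserting ``since $X$ is CAT(0), every local geodesic is the unique geodesic between its endpoints.'' But $X$ is not CAT(0); it is only non-positively curved, i.e.\ locally CAT(0), and in such spaces local geodesics need not be geodesics (think of a closed geodesic on a flat cylinder). This is not a pedantic distinction: if $X$ really were globally CAT(0), then, $A$ being compact and hence complete, the local isometries $\phi,\psi$ would automatically be isometric embeddings onto convex subspaces and the basic gluing theorem would finish the proof immediately, with no tube and no local analysis --- the locality of the hypothesis on $X$ is the entire content of the proposition. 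The repair is to run your argument inside a single small ball: every $X$-geodesic between points of $\Sigma$ has length at most $2\epsilon$ and so stays in $\bar B_X(p,3\epsilon)$; choose $\epsilon$ small enough that this ball lies inside a CAT(0) neighbourhood of $p$, hence (balls in CAT(0) spaces being convex) is itself a convex CAT(0) space; then the $\phi$-images of $A$-geodesics are local geodesics \emph{contained in that ball}, where local geodesics are genuine geodesics, and your conclusion that $\phi$ restricts to an isometric embedding of $\bar B_A(a,\epsilon)$ onto a convex subset follows. With these two corrections --- finitely many flaps, and convexity argued inside a small CAT(0) ball rather than in $X$ --- your outline does become a correct proof along the standard lines.
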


We can now prove the following. 

\begin{Theorem}[Positive curvature in CAT(0) groups]\label{thm:cat0}
For fixed $n\ge 3, k\ge 2$, the  group
$$G_{n, k} = \langle x, y, a_1, \dots, a_k\mid a_i x^na_i^{-1} = y^{n-2}, a_i^{-1}x^na_i = y^{n-2} \ 
\forall i\rangle$$
	is CAT(0). In each such group,  $\kappa(x^{jn})>0$ for all $j=1,2,\dots$.
\end{Theorem}
\begin{proof}
	Take a rose $R$ with 2 petals one of length $n$ and one of length $n-2$. Given a rose $R'$ with two petals both of length $n(n-2)$ there are two local isometries $\phi, \psi \colon R'\to R$ wrapping the loops around the loops in $R$ the appropriate number of times. Doing this $k$ times will give a non-positively curved quotient by the above proposition. A simple application of Seifert--van Kampen shows that the fundamental group of this space is $G_{n, k}$.
	
	We claim that $a_1^{-1}y^{j(n-2)}a_1$ is a geodesic spelling for $x^{jn}$. 
	To see this consider the map 
$\phi\colon G_{n, k}\to \Z$ given by $a_i\mapsto 0$, $x\mapsto n-2$, and $y\mapsto n$.
	Let $w$ be a word in $G_{n, k}$. 
	If $|w| < k$, then $\phi(w)<kn$. 
	Suppose now that $v$ is a geodesic spelling for $x^{jn}$. 
	Using HNN normal forms, $v$ has normal form $x^{jn}$, so there must be a ``pinch,'' i.e., $v$ decomposes as $w_1 a_i^{\epsilon}w_2a_i^{-\epsilon}w_3$, where $w_2\in \langle x^n, y^{n-2}\rangle.$ 
	Since $\phi(a_i) = 0$ we can see that $\phi(v) = \phi(w_1w_2w_3) = jn(n-2)$. 
	Thus, $|w_1w_2w_3| \geq j(n-2)$.
	We conclude that $|v| \geq j(n-2)+2$. 

	Thus we have  $|x^{jn}|=j(n-2)+2$, and  by construction, for all $i$, 
	$|a_i^{-1}x^{jn}a_i|=|y^{j(n-2)}|=j(n-2)$, and similarly for $a_i^{-1}$. 
	Additionally, $|xx^nx^{-1}|= |x^{-1}x^{jn}x|=|x^{jn}| = j(n-2) +2$, and 
	by the triangle inequality, $|y^{-1}x^{jn}y|\le j(n-2)+4$, 
	and similarly $|yx^{jn}y^{-1}|\le j(n-2)+4$. All together, we get
	\begin{align*}
	\Av(x^{jn}) &\le \frac{1}{2k+4} \bigl(2k(j(n-2))+2(j(n-2)+2)+2(j(n-2)+4)\bigr) \\
	&= \frac{1}{2k+4}\bigl((2k+4)(j(n-2)) + 12\bigr)\le j(n-2)+2.
	\end{align*}
	This gives 
	$\kappa(x^{jn}) = 1-\frac{1}{j(n-2)+2}(\Av(x^{jn})\bigr)>0$, 
	as desired.
\end{proof}

\subsection{A hyperbolic example}

Before we turn to a hyperbolic group construction, we start with a useful computation for products.
Let us adopt the notation that if $S_1,S_2$ are generating sets for $G_1,G_2$, respectively, then 
$S_1 \boxtimes S_2$ will denote the {\em split} generating set $S = (S_1\times\{e\})\cup (\{e\}\times S_2)$ for the product
$G_1\times G_2$.

\begin{Proposition}[Product formula]\label{prop:productformula}
	Let $G_1$ and $G_2$ be groups with generating sets 
	$S_1, S_2$ respectively. Let $S=S_1\boxtimes S_2$
	 be the split generating 
	set for $G_1\times G_2$. Then $$\Av\bigl( (x, y)\bigr) = 
	\frac{|S_1|\sdot\Av(x)+|S_1| \sdot |y|+|S_2| \sdot |x|+|S_2|\sdot\Av(y)}{|S_1|+|S_2|}.$$
\end{Proposition}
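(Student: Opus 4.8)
The plan is to reduce everything to the single structural fact that word length in a direct product with a split generating set is additive across the factors. Concretely, I would first establish the lemma that for any $(g,h)\in G_1\times G_2$ one has $|(g,h)|_S = |g| + |h|$, where the lengths on the right are measured in $(G_1,S_1)$ and $(G_2,S_2)$ respectively. The reason is that $S$ contains only generators of the form $(a,e)$ with $a\in S_1$ and $(e,b)$ with $b\in S_2$; in any spelling of $(g,h)$, the first-coordinate letters (those from $S_1$) must by themselves spell $g$ and the second-coordinate letters must spell $h$, because the two families of generators commute and act independently on the two coordinates. Hence any spelling uses at least $|g|$ letters of the first type and at least $|h|$ of the second, and concatenating geodesic spellings of $g$ and $h$ achieves this lower bound.

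With additivity in hand, the rest is bookkeeping. I would partition $S$ into its $|S_1|$ generators of the form $(a,e)$ and its $|S_2|$ generators of the form $(e,b)$, so that $|S| = |S_1| + |S_2|$. Conjugation is computed coordinatewise: for $a\in S_1$ we have $(a,e)^{-1}(x,y)(a,e) = (a^{-1}xa,\,y)$, and for $b\in S_2$ we have $(e,b)^{-1}(x,y)(e,b) = (x,\,b^{-1}yb)$. Applying the additivity lemma then gives $|(a,e)^{-1}(x,y)(a,e)| = |a^{-1}xa| + |y|$ and $|(e,b)^{-1}(x,y)(e,b)| = |x| + |b^{-1}yb|$, where the factor-lengths are taken in $(G_1,S_1)$ and $(G_2,S_2)$.

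Finally I would substitute these into the definition $\Av((x,y)) = \frac{1}{|S|}\sum_{s\in S}|s^{-1}(x,y)s|$ and split the sum over the two families. Summing over $a\in S_1$ produces $\sum_{a\in S_1}|a^{-1}xa| + |S_1|\,|y| = |S_1|\Av(x) + |S_1|\,|y|$, recognizing the first term as $|S_1|\Av(x)$ directly from the definition of $\Av$ in $(G_1,S_1)$; summing over $b\in S_2$ produces $|S_2|\,|x| + |S_2|\Av(y)$ symmetrically. Adding these contributions and dividing by $|S_1|+|S_2|$ yields exactly the claimed formula. The only genuine content here is the additivity lemma $|(g,h)|_S = |g|+|h|$; once that is secured, the remaining computation is entirely forced, and I expect the verification of this lemma (in particular the lower bound, which uses the commuting/independence of the two coordinate actions) to be the main point that needs care.
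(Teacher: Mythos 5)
Your proposal is correct and takes essentially the same route as the paper's proof: split the sum defining $\Av((x,y))$ over the two families of generators, compute conjugation coordinatewise, and use additivity of word length in the split generating set. The only difference is that you state and prove the additivity lemma $|(g,h)|_S = |g| + |h|$ explicitly, whereas the paper uses it silently in the step passing from $|(s_1^{-1}xs_1,\,y)|$ to $|s_1^{-1}xs_1| + |y|$; making that lemma explicit is a reasonable (and slightly more careful) presentation of the same argument.
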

\begin{proof}
	We will compute $(|S_1|+|S_2|)\Av\bigl( (x, y)\bigr)$. This is just the sum of the lengths of all conjugates of $(x, y)$. 
	\begin{align*}
		\sum_{s\in S}|s^{-1}(x, y)s| &= \sum_{s_1\in S_1}|(s_1^{-1}, e)(x, y)(s_1, e)| + \sum_{s_2\in S_2}|(e, s_2^{-1})(x, y)(e, s_2)|\\
		&=\sum_{s_1\in S_1}|(s_1^{-1}xs_1, y)| + \sum_{s_2\in S_2}|(x, s_2^{-1}ys_2)|\\
		&=\sum_{s_1\in S_1}(|s_1^{-1}xs_1|+|y|)+\sum_{s_2\in S_2} (|s_2^{-1}ys_2|+|x|)\\
		&=|S_1| \sdot |y|+|S_2| \sdot |x|+\sum_{s_1\in S_1}|s_1^{-1}xs_1|+\sum_{s_2\in S_2}|s_2^{-1}ys_2|\\
		&=|S_1|\sdot\Av(x)+|S_1| \sdot |y|+|S_2| \sdot |x|+|S_2|\sdot\Av(y).\qedhere
	\end{align*}
\end{proof}

\begin{Theorem}[Positive curvature in hyperbolic groups]\label{thm:hyperbolic}
	There is a non-elementary hyperbolic group and a sequence of elements $w_i\to\infty$ such that $\kappa(w_i)>0$.
\end{Theorem}

\begin{proof}
	Consider two random reduced spellings $x, y\in F(a,b)=\langle a,b \mid \emptyset \rangle$ of length 
	$l$ and let $w_1 = axa$ and $w_2 = y$.  
	Then for any $\epsilon>0$, the following properties hold with positive probability as $l\to\infty$:
	\begin{itemize}
		\item $w_1$ and $w_2$ are cyclically and freely reduced,  and 
		\item $\braket{a,b\mid w_1, w_2}$ is a $C'(\epsilon)$ small cancellation group.  
	\end{itemize}
	Fix some such $w_1,w_2$.

	Consider the group $G=\langle a, b, t\mid t^{-1}w_1 t = w_2\rangle$, and let $S=\{a,b,t\}^\pm$
	denote its generators.
	By the above hypothesis, 
	we can fix $\epsilon=1/6$ and choose $w_1,w_2$ as relators 
	presenting a $C'(1/6)$ small cancellation group, which is necessarily
	a non-elementary hyperbolic group. By small cancellation, 
	any trivial word contains at least half a relation, 
	therefore $w_1$ is a geodesic spelling in the group $G$.  We have $|w_1|=l+2$ and 
	$|w_2|=l$.

Consider the lengths of the 6 conjugates of $w_1$, recalling that $w_1$ starts and ends with $a$.
	\begin{align*}
	|a^{-1}w_1a|&=l+2, 	&|t^{-1}w_1t|&=l, 	&|b^{-1}w_1b|&\le l+4,\\
	|aw_1a^{-1}|&=l+2, 	&|tw_1t^{-1}|&\le |w_1|+2=l+4, 	&|bw_1b^{-1}|&\le l+4.
	\end{align*}	
Thus we have $\Av(w_1) \le \frac{6l+16}{6} = |w_1|+\frac{2}{3}$.

The hyperbolic group we are constructing is $\left( G\times \Symm(4), \ S \boxtimes \Spos\right)$, 
where $\Spos$ is the generating set from Proposition \ref{symm-sign} that was designed to produce positive conjugation curvature.  This is hyperbolic because $G$ has finite index.  
Note that $|\sigma|=2$ in these generators.
From the proof of Proposition \ref{symm-sign}, we know that $\Av(\sigma) = \frac{26}{21}$.
	We can now compute the curvature 
	$\kappa \bigl( (w_1,\sigma)\bigr)$ using the formula from 
	Proposition \ref{prop:productformula}, obtaining 
$$\Av\bigl( (w_1,\sigma)\bigr) \le \frac{6l+16+12+21(l+2)+26}{27} = \frac{27l+96}{27} = l+\frac{32}{9}<l+4.$$
This gives us positive  curvature: $\kappa\bigl( (w_1,\sigma)\bigr) = 1- \frac{1}{l+4}\Av\bigl( (w_1,\sigma)\bigr)>0$. 
	
	To get an infinite sequence of words, let 
	$v_n=w_1t^{-1}w_2^n t w_1$. This is a geodesic word of length 
	$nl+2+2(l+2)$ in $(G,S)$.  
	Conjugating $v_n$ by $a$ or $a^{-1}$ preserves length,  conjugating by 
	$t$ will shorten it by 2 and the other three conjugations (by $t^{-1},b,b^{-1}$) will 
	lengthen it by at most 2. Thus, we get $\Av(v_n)\le |v_n|+\frac 23$.  
	On the other hand, so we get 
	$$\Av\bigl( (v_n,\sigma)\bigr)\le |v_n|+ \frac{38}{27}<|v_n|+2.$$ 
	Thus we can see that we have $\kappa\bigl( (v_n, \sigma)\bigr)>0$. 
\end{proof}

This is not a glitch, but rather conjugation curvature is picking up something like a Sphere$\times\R$ built into the group, which is allowed by hyperbolicity.

\section{Behavior under isometric embeddings}

In this section we show that the designer generating sets $\Spos,\Sneg$ can induce bad embeddings.
Recall the notation $S_1\boxtimes S_2 = (S_1\times\{e\})\cup (\{e\}\times S_2)$.

\begin{Theorem}[Positively curved embedding]\label{thm:pos-embed}
For any $(G,S)$ and sufficiently large $n$, there is an isometric embedding $i\colon G\to G\times \Symm(n)$ such that $\kappa \bigl(i(g)\bigr)>0$ for all $g\in G$ with respect to a generating set $S\boxtimes \Spos$. 
	Moreover, $i$ is a quasi-homomorphism:  $d\bigl(i(gh), i(g)i(h)\bigr)\leq 2$. 
\end{Theorem}
\begin{proof}
Let  $\sigma = (1\dots n) \in \Symm(n)$ and 
let  $\Spos=\Symm(n)\ssm\{e, \sigma\}^\pm$ be the generating set for $\Symm(n)$ designed 
to make $\kappa(\sigma)>0$, as in  Proposition \ref{symm-sign}. 

	Consider the  map $i\colon G\to G\times \Symm(n)$  defined by $i(g) = (g, \sigma)$.
Then by Proposition \ref{prop:productformula} we can write 
	\begin{align*}
		\Av\bigl( (g,\sigma)\bigr)=&\frac{(|S|+|\Spos|)|(g,\sigma)|+|S|(\Av(g)-|g|)+|\Spos|(\Av(\sigma)-|\sigma|)}{|S|+|\Spos|}\\
		 = &|(g,\sigma)|+\frac{|S|(\Av(g)-|g|)+|\Spos|(\Av(\sigma)-|\sigma|)}{|S|+|\Spos|}.
	\end{align*}
To have positive curvature at the point $(g, \sigma)$ we require $|S|(\Av(g)-|g|)+|\Spos|(\Av(\sigma)-|\sigma|)$ to be negative.
Since $S$ is fixed and $\Av(g)-|g|\leq 2$, we must pick $n$ large enough so that $|\Spos|(\Av(\sigma)-|\sigma|)<2|S|$. 
From Proposition \ref{symm-sign}, $|\Spos|(\Av(\sigma)-|\sigma|) = n!+2n-6 - 2(n!-3) = -n!+2n$. Taking $n$ large we can ensure 
$-n!+2n+2|S|<0$. Thus for sufficiently large $n$ we have $\Av\bigl( (g, \sigma)\bigr)<|(g, \sigma)|$ and $\kappa\bigl( (g,\sigma)\bigr)>0. $
\end{proof}

In an exactly similar fashion, we get a corresponding statement for negative curvature.

\begin{Theorem}[Negatively curved embedding]\label{thm:neg-embed}
For any $(G,S)$ and sufficiently large $n$, there is an isometric embedding $i\colon G\to G\times \Symm(n)$ such that $\kappa \bigl(i(g)\bigr)<0$ for all $g\in G$ with respect to a generating set $S\boxtimes \Sneg$. 
	 Moreover, $i$ is a quasi-homomorphism: $d\bigl(i(gh), i(g)i(h)\bigr)\leq 1$. 
\end{Theorem}

\begin{proof}
Let $\Sneg(n) = \{(12), (23), \dots(n\! -\! 1 \  n), \sigma^\pm \}$
 be the generating set for $\Symm(n)$ designed to make $\kappa(\sigma)<0$, as in Proposition \ref{symm-sign}.
 Take the embedding $i\colon G\to G\times \Symm(n)$  defined by $i(g) = (g, \sigma)$, as above.
	To have negative curvature at the point $(g, \sigma)$ we require 
	$\frac{|S|(\Av(g)-|g|)+|\Sneg|(\Av(\sigma)-|\sigma|)}{|S|+|\Sneg|}$ to be greater than 0. 
	Since $S$ is fixed and $\Av(g)-|g|\geq -2$, we must pick $n$ large enough so that $|\Sneg|(\Av(\sigma)-|\sigma|)>-2|S|$. 
	From Proposition \ref{symm-sign}, $|\Sneg|(\Av(\sigma)-|\sigma|) = 2n - 2$. Taking $n>|S|$ we get that $2n - 2-2|S|>0$. 
	Thus for sufficiently large $n$ $\Av\bigl( (g, \sigma)\bigr)>|(g, \sigma)|$ and $\kappa\bigl( (g, \sigma)\bigr)<0.$
\end{proof}

In both examples above, the embedding is quasi-dense. If we relax this requirement, we can make the negative curvature embedding a homomorphism. 

\begin{Theorem}[Negatively curved homomorphic embedding]\label{thm:neg-embed2}
For any group $(G,S)$, there exists a group $(H,T)$ and a homomorphism $\phi\colon G\to H$ such that
	\begin{enumerate}
		\item $\phi$ is an isometry, and 
		\item $\kappa\bigl(\phi(g)\bigr)<0$ for all $g\in G\smallsetminus\{e\}.$  
	\end{enumerate}
	In fact, $\kappa(h)<0$ for all $h\in H\smallsetminus\{e\}$. 
\end{Theorem}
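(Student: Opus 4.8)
The plan is to realize $H$ as a free product $H = G * F_N$, where $F_N$ is free of rank $N$ and $N$ will be chosen large in terms of $|T|$. I equip $H$ with the generating set $S = T \cup \{f_1^{\pm 1}, \dots, f_N^{\pm 1}\}$, so that $|S| = |T| + 2N$, and I let $\phi\colon G \to H$ be the inclusion. This is a homomorphism by construction. That $\phi$ is an isometry is the standard fact that a free factor is isometrically embedded: any word over $S$ representing an element $g \in G$ must, in free-product normal form, have all of its $F_N$-letters cancel, so deleting them yields a word over $T$ alone that is no longer than the original; hence $|\phi(g)|_H = |g|_G$. Thus condition (1) and the homomorphism requirement are immediate, and the two curvature claims will follow at once from the \emph{in fact} statement.

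So fix any reduced $w \in H$ with $|w| = m \ge 1$, first letter $x_1$ and last letter $x_m$, and estimate $\Av(w)$ from below. Conjugating $w$ by a free generator $s \in \{f_i^{\pm 1}\}$, the word $s^{-1} w s$ is already reduced except for at most one cancellation at the front (exactly when $x_1 = s$) and at most one at the back (exactly when $x_m = s^{-1}$), there being no further cancellation because $w$ itself is reduced. As $s$ runs over the $2N$ free generators, the front cancels for at most one choice of $s$ and the back for at most one, so all but at most two of these $2N$ conjugates have length exactly $m + 2$, while the two exceptional ones still have length at least $m - 2$. Therefore
\[
\sum_{s \in \{f_i^{\pm 1}\}} |s^{-1} w s| \ \ge\ 2N(m + 2) - 4 .
\]
The remaining $|T|$ conjugations, by generators of $G$, each change length by at most $2$ in absolute value by Proposition~\ref{properties}\eqref{2/d}, so they contribute at least $|T|(m - 2)$ to the total.

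Combining the two estimates gives
\[
\Av(w) \ =\ \frac{1}{|S|}\sum_{s\in S}|s^{-1}ws| \ \ge\ m + \frac{4N - 4 - 2|T|}{2N + |T|},
\]
and choosing $N$ with $2N > |T| + 2$ makes the added fraction strictly positive. Thus $\Av(w) > m = |w|$, so $\kappa(w) < 0$ for every $w \in H \smallsetminus \{e\}$, and in particular for every $\phi(g)$ with $g \ne e$. The step I expect to require the most care is the free-product length bookkeeping: confirming that conjugation by a free generator raises the length by exactly $2$ apart from the $O(1)$ boundary generators singled out by $x_1$ and $x_m$, and checking the short-word edge cases $m = 1$ directly (for instance, when $w$ is itself a free generator, precisely two of the free-generator conjugations preserve its length while all the rest increase it, which is consistent with the displayed bound).
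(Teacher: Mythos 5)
Your proposal is correct and takes essentially the same approach as the paper: the identical construction $H = G \ast F_N$ with the split generating set $T \cup \{f_1^{\pm1},\dots,f_N^{\pm1}\}$ and the inclusion homomorphism, with negative curvature everywhere coming from the fact that conjugation by a free generator adds exactly $2$ to the length except for boundedly many boundary cancellations, which swamps the at-most-$2$ shortening available to $T$-conjugations once $N$ is large. The only difference is organizational---the paper argues via a three-way case analysis on whether the first and last letters of $h\in H$ lie in $G$ or in the free factor, while you run a single uniform estimate; note that your displayed bound $2N(m+2)-4$ is correct (a one-sided cancellation costs only $2$, so front and back together cost at most $4$), even though the justification as written (``at most two exceptional conjugates, each of length at least $m-2$'') would literally give only $2N(m+2)-8$, which still suffices after slightly enlarging $N$.
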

\begin{proof}
	Let $F_n$ be the free group of rank $n$ with the standard generating set $U$. We defer the choice of $n$ until later. Let $H = G\ast F_n$ with the generating set $T = S\cup U$. 
	Then $\phi(g) = g$ defines a homomorphism which is an isometry onto its image. 
	
	We must now compute $\Av(g)$ for $g\in G$. Let $u$ be a generator of $F_n$. The word $u^{-1}gu$ is a geodesic in the group $G\ast F_n$ and so this element has length $|g|+2$. Thus, $$\Av(H, T,g) = \frac{|S|\sdot\Av(G,S,g)+2n(|g|+2)}{|S|+2n}.$$
	Since $\Av(G,S,g)\geq |g|-2$, taking $n>\frac{|S|}{2}$, we have $\Av(H, T,g)>|g|.$ Thus, $\kappa(g)<0$. 
	
	Let $h$ be an element of $H$. If one geodesic representative starts with an element of $G$, then all geodesic representatives will start with a letter of $G$. Similarly, if any geodesic representative ends with a letter of $G$, then all geodesic representatives end with a letter of $G$. 
	
	We now split into cases. Firstly, consider the case that both the first and last letters of $h$ are in $G$. In this case we see that every element of $F_n$ lengthens this word. Thus taking $n$ large enough we can ensure such elements have negative curvature. 
	
	Secondly, for an element that has a geodesic representative where both the first and last letter are in $F_n$ we see that all elements of $S$ will lengthen this element. From the calculation in Example \ref{freecurv} we see that for any $n\geq1$ these elements will have negative curvature. 
	
	Finally consider the case that a geodesic representative starts with a letter of $S$ and ends with a letter of $U$. Conjugating by elements of $S$ will either preserve the length or make the word longer. Conjugating by $U$ will have a similar effect. Taking $n\geq 2$ we see that there is an element of $U$ which lengthens the word and thus we get negative curvature. 
	
This completes the proof that the group has negative curvature everywhere. 
\end{proof}

Recall the well-known fact that no  hyperbolic group has a subgroup isomorphic to $\Z^2$.
So using the above construction with $G=\Z^2$, we have constructed a non-hyperbolic group
with every non-identity element negatively curved.  
So negative conjugation curvature does not signal $\delta$--hyperbolicity.  
As we will discuss below, it does signal exponential growth.

\section{Curvature outside a ball}
Since generators are always nonpostively curved (see Prop~\ref{properties}\eqref{avgen}), no nontrivial group can have 
positive curvature  at all points.
What happens when we assume that $\kappa(g)>0$ for all 
$g$ outside a ball of finite radius?
\begin{Theorem}[Positive curvature outside a ball] \label{Finite_Outside_Ball}\label{thm:finiteness}
  Suppose for some $(G,S)$ there exists 
  $R\in \N$ such that $\kappa(g)>0$ for all $g\not\in B_R$. Then $G$ is a finite group.
\end{Theorem}
\begin{proof}
Define $A_n = S_{2n}\cup S_{2n+1}$ to be an annulus in the Cayley graph. 
For all $g\in G$, $s\in S$, let $w_{g, s} = |g| - |s^{-1}gs|$ measure the amount that $g$ is shortened
by conjugation by $s$.
	 Let $c_n = \sum\limits_{S_n\times  S}w_{g, s}$. Noting that for $g\in S_n$ we have $|g| = n$, the following chain of equalities relating $c_n$ to $\kappa(g)$. 
	
	\begin{align*}
		c_n &= \sum_{(g, s)\in S_n\times S}w_{g, s}
			= \sum_{(g, s)\in S_n\times S}\Big(|g| - |s^{-1}gs|\Big)
			= \sum_{g\in S_n} \Big( |S|\sdot |g| - \sum_{s\in S}|s^{-1}gs| \Big) \\
			&= \sum_{g\in S_n} \Big(|S|\sdot |g| - |S|\sdot \S_1(g)\Big)
			= n|S|\sum_{g\in S_n} \Big(1 - \frac{\S_1(g)}{n}\Big)
			= n|S|\sum_{g\in S_n} \kappa(g).
	\end{align*}
	Thus for $n>R$ we have $c_n>0$. 
	
Define the following sets: 
\begin{align*}
K_n = \{(g, s)\in A_n\times S \mid  s^{-1}gs\in A_{n-1}\} \ ; \qquad 
J_n = \{(g, s)\in A_n\times S \mid  s^{-1}gs\in A_{n+1}\};\\
L_n = \{(g, s)\in S_n\times S \mid  w_{g,s}=1\} \ ; \qquad 
U_n = \{(g, s)\in S_n\times S \mid  w_{g,s}=-1\}.
\end{align*}

	Let $k_n = \sum_{K_n}w_{g, s}$. 
	The generating set $S$ is closed under inversion so,
$\sum_{L_{n+1}}w_{g, s} = -\sum_{U_{n}}w_{g, s}$ and 
$\sum_{J_n}w_{g, s} = -\sum_{K_{n+1}}w_{g, s}=-k_{n+1}$.
	
	For $n\gg R$ we have
	\begin{align*}
		0<c_{2n}+c_{2n+1} &= \sum_{(g, s)\in S_{2n}\times  S}w_{g, s} + \sum_{(g, s)\in S_{2n+1}\times  S}w_{g, s}\\
		&= \sum_{(g, s)\in K_n} w_{g, s} + \sum_{(g, s)\in L_{2n+1}}w_{g, s} + \sum_{(g, s)\in U_{2n}}w_{g, s} + \sum_{(g, s)\in J_n} w_{g, s}\\
		&= \sum_{(g, s)\in K_n} w_{g, s} + \sum_{(g, s)\in J_n} w_{g, s}
		= \sum_{(g, s)\in K_n} w_{g, s} - \sum_{(g, s)\in K_{n+1}} w_{g, s}
		= k_n- k_{n+1}.
	\end{align*}
We conclude that for indices bigger than $R$, the sequence $(k_n)$ is strictly decreasing.

	Positive curvature ensures that every element of $S_{2n}$ has at least one generator which conjugates it into $A_{n-1}$, so we have  $$|S_{2n}|\leq k_n \leq 2|K_n|\leq 2|S|\sdot |A_n|.$$
	
	We are now ready to assemble these facts. If $G$ has linear growth, then it is virtually $\Z$ and we can appeal to Proposition \ref{prop:zhaszeroes}. 
	Thus assume $G$ is a group with superlinear growth. Since $|S_{2n}|\to\infty$, we see that for 
$m\gg n \gg R$ we get $$|S_{2m}|\leq k_m< k_n\leq 2|S|\sdot |A_n|\leq 4|S| \sdot |S_{2n+1}|,$$ 
giving a contradiction. 
\end{proof}

\section{Directions for future study} 

We have seen here that positive curvature outside of a ball signals finiteness for the group, i.e., that the growth function becomes constant.  
At the time of circulating this paper, we conjectured that negative curvature outside of a ball signals exponential growth, which is now proved in a new preprint by Nguyen--Wang \cite{expgrowth}.  We suspect that stronger group-theoretic conditions are also implied by negative conjugation curvature, such as non-amenability.

In this paper, we have established that interesting results are already present at $r=1$, i.e., 
conjugation by generators.
A future direction to explore is adjusting the radius parameter, making spherical comparisons for fixed $r>1$.
In particular, for hyperbolic groups, studying $\kappa_r$  for sufficiently large $r$ will give negative curvature. 
Ollivier shows in \cite{ollivier2009} that one can choose $r=r(\delta)$ for a hyperbolic group so that his transportation curvature is uniformly negative.  Since conjugation curvature lower-bounds transportation curvature, that finding carries over to the present definition for $r=r(S)$.

As a positive sign for this direction of inquiry, Kropholler--Mallery have recently shown that passing to large $r$ will not wipe out all of the findings of mixed curvature:  they show in 
 \cite{krophollermallery} that the positive-density results for $\kappa=0,<0,>0$ persist for large $r$.
  
The Kropholler--Mallery paper also proves interesting connections between values of $r$ for which 
$\kappa_r>0$ and an invariant called dead-end depth in Cayley graphs. 
Suggestively, hyperbolic groups have uniformly bounded dead-end depth, and we've seen that conjugation curvature is positive for only finitely many radii. 
The property of uniformly bounded dead-end depth also holds true in Cayley graphs of virtually abelian groups and groups with more than one end \cite{lehnert}. 
This leads us to conjecture that conjugation curvature at larger radius will eliminate some of the unwanted sources of  positive curvature.
\begin{Question}
Let $(G,S)$ be a finitely generated virtually abelian group or a finitely generated group with more than one end. 
Is there an $r = r(S)$ such that $\kappa_r(g)\leq 0$ for all $g\in G$?
\end{Question}

Finally, two more very natural tweaks to create a more robust invariant would be evaluating averages on balls rather than spheres, and letting $r$ vary with $|x|$.  Taking $\kappa^\B_r(x)$, where $r$ is ``tuned" as a function of $|x|$, effectively averages over larger selections of group elements. 
For instance it would be interesting to study the $r=|x|$ case or the $r=\sqrt{|x|}$ case, which in particular would lead to negative curvature outside a ball in hyperbolic groups.  These variants could be profitably studied 
both for our conjugation curvature and for Ollivier's transportation curvature, giving a promising way to evaluate
the curvature of large but finite graphs.  Making a principled choice of scale is an excellent question for further study.

\end{document}